\newcommand{\la}{\langle}
\newcommand{\ra}{\rangle}
\newcommand{\R}{\mathbb{R}}
\newtheorem{theorem}{Theorem}
\newtheorem{proposition}[theorem]{Proposition}
\newtheorem{lemma}[theorem]{Lemma}
\newtheorem{corollary}[theorem]{Corollary}
\theoremstyle{remark}
\newtheorem{remark}[theorem]{Remark}
\numberwithin{equation}{section}
\numberwithin{theorem}{section}
\numberwithin{table}{section}
\numberwithin{figure}{section}
\title[fractional schr\"odinger]{On Fractional Schr\"odinger Equations in sobolev spaces}
\date{\today}
\author{Younghun Hong}
\address{University of Texas, Austin}
\author{Yannick Sire}
\address{Universit\'e Aix-Marseille, I2M, UMR 7353,Marseille, France}
\begin{document}

\maketitle

\begin{abstract}
Let $\sigma\in(0,1)$ with $\sigma\neq\frac{1}{2}$. We investigate the fractional nonlinear Schr\"odinger equation in $\mathbb R^d$:
$$i\partial_tu+(-\Delta)^\sigma u+\mu|u|^{p-1}u=0,\, u(0)=u_0\in H^s,$$
where $(-\Delta)^\sigma$ is the Fourier multiplier of symbol $|\xi|^{2\sigma}$, and $\mu=\pm 1$. This model has been introduced by Laskin in quantum physics \cite{laskin}. We establish local well-posedness and ill-posedness in Sobolev spaces for power-type nonlinearities.  
\end{abstract}

\tableofcontents

\section{Introduction}
Let $\sigma\in(0,1)$ with $\sigma\neq\frac{1}{2}$. We consider the Cauchy problem for the fractional nonlinear Schr\"odinger equation
\begin{equation}\tag{$\textup{NLS}_\sigma$}
i\partial_tu+(-\Delta)^\sigma u+\mu|u|^{p-1}u=0,\ u(0)=u_0\in H^s,
\end{equation}
where $\mu= \pm 1$ depending on the focusing or defocusing case. The operator $(-\Delta)^{\sigma}$ is the so-called fractional laplacian, a Fourier multiplier of $|\xi|^{2\sigma}$. The fractional laplacian is the infinitesimal generator of some Levy processes \cite{B}. A rather extensive study of the potential theoretic aspects of this operator can be found in \cite{landkof}.

The previous equation is a fundamental equation of fractional quantum mechnics, a generalization of the standard quantum mechanics extending the Feynman path integral to Levy processes \cite{laskin}. 

The purpose of the present paper is to develop a general well-posedness and ill-posedness theory in Sobolev spaces. The one-dimensional case has been treated in \cite{CHKL} for cubic nonlinearities, i.e. $p=3$, and $\sigma\in(\frac{1}{2},1)$. Here, we consider a higher-dimensional version and other types of nonlinear terms. We also include all $\sigma\in (0,1)$ except $\sigma=\frac{1}{2}$; furthermore, contrary to \cite{CHKL} where the use of Bourgain spaces was crucial (since the main goal of their paper was to derive well-posedness theory on the flat torus), we rely only on standard Strichartz estimates and functional inequalities in $\R^d.$ In the case of Hartree-type nonlinearities, the local well-posedness and blow-up have been investigated in \cite{ozawa}. 

In the present paper, we will not consider global aspects with large data. For that, we refer the reader to \cite{sire2} for a study of the energy-critical equation in the radial case, following the seminal work of Kenig and Merle \cite{km1,km2}. As a consequence, we do not consider blow-up phenomena, an aspect we will treat in a forthcoming work.

We introduce two important exponents for our purposes: 
$$
s_c=\frac{d}{2}-\frac{2\sigma}{p-1}
$$
and 
$$s_g=\frac{1-\sigma}{2}.$$

Here, $s_c$ is the scaling-critical regularity exponent in the following sense: for $\lambda>0$, the transformation
\[u(t,x)\mapsto \frac{1}{\lambda^{2\sigma/(p-1)}} u\Big(\frac{t}{\lambda^{2\sigma}},\frac{x}{\lambda}\Big), \quad u_0(x)\mapsto \frac{1}{\lambda^{2\sigma/(p-1)}} u_0\Big(\frac{x}{\lambda}\Big)\]
keeps the equation invariant and one can expect local-wellposedness for $s \geq s_c$, since the scaling leaves the $\dot H^{s_c}$ norm invariant. On the other hand, $s_g$ is the critical regularity in the ``pseudo"-Galilean invariance (see the proof of ill-posedness below). 
Under the flow of the equation ($\textup{NLS}_\sigma$), the following
quantities are conserved:
\begin{align*}
M[u]=&\int_{\mathbb R^d} |u(t,x)|^2dx&&\textup{(mass)},\\
E[u]=&\int_{\mathbb R^d}\frac{1}{2}|\,|\nabla|^{\sigma}u(t,x)|^2+\frac{\mu
}{p+1}|u(t,x)|^{p+1}dx.&&\textup{(energy)}.
\end{align*}

An important feature of the equation under study is a loss of derivatives for the Strichartz estimates as proved in \cite{COX}. Unless additional assumptions are met such as radiality as in \cite{zihua}, one has a loss of $d(1-\sigma)$ derivatives in the dispersion (see \eqref{dispersive estimate with loss}). This happens to be an issue in several arguments.

\subsection*{Main results} The goal of this paper is to show that $(\textup{NLS}_\sigma)$ is locally well-posed in $H^s$ for $s\geq \max(s_c,s_g, 0)$, and it is ill-posed in $H^s$ for $s\in (s_c, 0)$. We start with well-posedness results. 
\begin{theorem}[Local well-posedness in subcritical cases]\label{subcritical LWP}
Let
\begin{align*}
\left\{\begin{aligned}
&s\geq s_g&&\text{ when }d=1\textup{ and }2\leq p<5,\\
&s>s_c&&\text{ when }d=1\textup{ and }p\geq 5,\\
&s>s_c&&\text{ when }d\geq 2\textup{ and }p\geq3.
\end{aligned}
\right.
\end{align*}
Then, $(\textup{NLS}_\sigma)$ is locally well-posed in $H^s$. 
\end{theorem}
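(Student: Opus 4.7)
The plan is a standard Picard contraction on the Duhamel formulation
\[
\Phi(u)(t) = e^{it(-\Delta)^\sigma} u_0 - i\mu \int_0^t e^{i(t-s)(-\Delta)^\sigma} |u(s)|^{p-1}u(s) \, ds,
\]
posed on a closed ball of a resolution space $X_T \hookrightarrow C([0,T];H^s)$ for $T>0$ small. The starting point is the lossy Strichartz estimates of \cite{COX} (cf.\ \eqref{dispersive estimate with loss}): for a Schr\"odinger-admissible pair $(q,r)$, meaning $\tfrac{2}{q}+\tfrac{d}{r}=\tfrac{d}{2}$,
\[
\|e^{it(-\Delta)^\sigma} f\|_{L^q_t L^r_x} \les \||\nabla|^{\alpha(r)} f\|_{L^2_x}, \qquad \alpha(r) = d(1-\sigma)\Big(\tfrac12 - \tfrac{1}{r}\Big),
\]
together with the dual inhomogeneous version. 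The natural resolution space is then $X_T = L^\infty_T H^s \cap L^q_T W^{s-\alpha(r),r}_x$ with $(q,r)$ to be tuned to the nonlinearity.

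\textbf{Nonlinear estimate.} The heart of the argument is a bound of the form
\[
\big\| |\nabla|^{s-\alpha(\tilde r)}(|u|^{p-1}u)\big\|_{L^{\tilde q'}_T L^{\tilde r'}_x} \les T^\theta \|u\|_{X_T}^p
\]
for some admissible $(\tilde q,\tilde r)$ and some $\theta\geq 0$. I would use the Christ--Weinstein fractional chain rule together with H\"older in space to pass $|\nabla|^s$ onto a single factor, and Sobolev embedding $W^{s,r}_x \hookrightarrow L^a_x$ to convert the resolution norm into Lebesgue norms on the remaining $p-1$ factors. H\"older in time then yields the $T^\theta$; one obtains $\theta>0$ exactly when $s>s_c$, while at the endpoint $s=s_g$ of the first regime one can still close the argument by exploiting absolute continuity of the Duhamel integral in $L^1_t$ rather than a power of $T$.

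\textbf{Contraction and case split.} Once this nonlinear estimate is available, Banach fixed point on a ball of radius $\sim \|u_0\|_{H^s}$ yields a unique fixed point of $\Phi$ and Lipschitz dependence on initial data, for $T$ chosen small in terms of $\|u_0\|_{H^s}$. The three regimes in the theorem simply encode which of $s_c$ and $s_g$ is binding: for $d=1$ and $2\leq p<5$ one checks $s_g\geq s_c$, so the Strichartz loss dictates the threshold and $s=s_g$ is reachable; for $d=1$ with $p\geq 5$, and for $d\geq 2$ with $p\geq 3$, one has $s_c\geq s_g$ and scaling dominates, which forces the strict inequality $s>s_c$ in order to extract the $T^\theta$ gain.

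\textbf{Main obstacle.} The delicate point is the $d(1-\sigma)$ loss of derivatives in the Strichartz inequality: the pair $(q,r)$ has to be chosen so that, after the chain rule, the nonlinearity lands in the dual space $L^{\tilde q'}_T L^{\tilde r'}_x$ with at most $s-\alpha(\tilde r)$ derivatives, and simultaneously the H\"older/Sobolev exponents remain compatible with the resolution norm $L^q_T W^{s-\alpha(r),r}_x$. Making all these constraints hold at once, in particular at the endpoint $s=s_g$ where no slack is available, is what forces the piecewise threshold $s\geq\max(s_c,s_g,0)$ and constitutes the technical crux of the argument.
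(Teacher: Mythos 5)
Your overall architecture --- Duhamel formulation, contraction in $L^\infty_T H^s$ intersected with a lossy Strichartz norm, fractional chain rule, Sobolev embedding, H\"older in time --- is the same as the paper's. However, the key estimate you propose to prove,
\[
\big\| |\nabla|^{s-\alpha(\tilde r)}(|u|^{p-1}u)\big\|_{L^{\tilde q'}_T L^{\tilde r'}_x} \les T^\theta \|u\|_{X_T}^p ,
\]
has the derivative loss entering with the wrong sign on the dual side, and this hides the constraint that actually drives the proof. Starting from \eqref{dispersive estimate with loss}, the Keel--Tao machinery gives, frequency by frequency, a retarded estimate of the form $\big\|\int_0^t e^{i(t-s)(-\Delta)^\sigma}P_NF\,ds\big\|_{L^q L^r}\les N^{\alpha(r)+\alpha(\tilde r)}\|P_NF\|_{L^{\tilde q'}L^{\tilde r'}}$: the loss is paid \emph{twice}, once for each exponent pair. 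Hence to recover the resolution norm on the left you must control $s+\alpha(\tilde r)$ derivatives of $|u|^{p-1}u$, not $s-\alpha(\tilde r)$; since the iteration only provides $s$ derivatives of $u$ (through $L^\infty_t H^s_x$), the only viable dual pair is $(\tilde q,\tilde r)=(\infty,2)$, i.e.\ the nonlinearity must be placed in $L^1_t H^s_x$. (This is precisely the point of Remark 3.9(i) in the paper, and it is why Proposition \ref{Strichartz} is stated with only $L^1_tH^s_x$ on the right.) Once that is forced, the chain rule \eqref{chain rule} with $\frac12=\frac1{p_1}+\frac1{p_2}$ forces $p_2=2$ and hence $p_1=\infty$: the remaining $p-1$ factors must be measured in $L^\infty_x$, not in some $L^a_x$ with finite $a$. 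The substance of the proof is then the control of $\|u\|_{L^{p-1}_tL^\infty_x}$ by an admissible Strichartz norm at regularity $s$: for $d=1$, $2\le p<5$ this is the pair $(4,\infty)$ directly, which costs exactly $\dot H^{s_g}$ and yields the gain $T^{\frac{5-p}{4}}$ by H\"older in time (so no separate absolute-continuity argument is needed at $s=s_g$; the slack comes from $p<5$, not from $s>s_g$); in the other regimes it is the embedding $W^{s-\alpha(r_0),r_0}_x\hookrightarrow L^\infty_x$, which holds precisely when $s>s_c$.

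A second point you gloss over: for $p$ not an odd integer the difference estimate $\||u|^{p-1}u-|v|^{p-1}v\|_{H^s_x}$ cannot in general be closed at regularity $s$, so "Banach fixed point yields Lipschitz dependence" is not automatic. The paper either carries out the difference estimate by hand with a Leibniz-plus-chain-rule argument (the $d=1$, $2\le p<5$ case) or contracts in the $X^0$ metric on a ball that is merely bounded in $X^s$, using completeness of that ball in the weaker topology (Lemma \ref{persistence of regularity}). One of these two devices is needed to turn your outline into a proof.
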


\begin{theorem}[Local well-posedness in critical cases]\label{critical LWP}
Suppose that
\begin{align*}
\left\{\begin{aligned}
&p>5&&\text{ when }d=1,\\
&p>3&&\text{ when }d\geq 2.
\end{aligned}
\right.
\end{align*}
Then, $(\textup{NLS}_\sigma)$ is locally well-posed in $H^{s_c}$. 
\end{theorem}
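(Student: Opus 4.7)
The plan is to prove Theorem~\ref{critical LWP} by a Cazenave--Weissler fixed-point argument at the critical regularity $H^{s_c}$. Since at critical regularity one cannot exploit smallness of $\|u_0\|_{H^{s_c}}$, the key is to extract smallness from the linear evolution on a short time interval. I would first select an admissible pair $(q,r)$ for the loss-of-derivative Strichartz estimates of $e^{it(-\Delta)^\sigma}$: because of the $d(1-\sigma)$ loss in \eqref{dispersive estimate with loss}, these take the schematic form $\||\nabla|^{-\gamma(q,r)}e^{it(-\Delta)^\sigma}f\|_{L^q_tL^r_x}\les \|f\|_{L^2}$ with a positive loss $\gamma(q,r)$. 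Choosing $(q,r)$ so that the $\dot H^{s_c}$-scaling is respected once this loss is folded in, the resolution space is
\[
X_T=\Big\{u:\ \||\nabla|^{s_c}u\|_{L^q([0,T];L^r_x)}+\|u\|_{L^\infty([0,T];H^{s_c})}\leq M\Big\},
\]
on which I would iterate the Duhamel map $\Phi(u)(t)=e^{it(-\Delta)^\sigma}u_0+i\mu\int_0^t e^{i(t-s)(-\Delta)^\sigma}|u|^{p-1}u\,ds$. Since $\||\nabla|^{s_c}e^{it(-\Delta)^\sigma}u_0\|_{L^q([0,T];L^r_x)}\to 0$ as $T\downarrow 0$ by dominated convergence in $t$, the linear part of $\Phi$ can be made arbitrarily small on $[0,T]$, which is what replaces the usual smallness of the data.

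The heart of the argument is a nonlinear estimate of the form
\[
\bigl\||\nabla|^{s_c}(|u|^{p-1}u)\bigr\|_{L^{q'}([0,T];L^{r'}_x)}\les \|u\|_{L^{\tilde q}([0,T];L^{\tilde r}_x)}^{p-1}\,\||\nabla|^{s_c}u\|_{L^q([0,T];L^r_x)},
\]
obtained from the fractional Leibniz rule together with the fractional chain rule applied to the non-smooth map $z\mapsto |z|^{p-1}z$, where $(\tilde q,\tilde r)$ is an auxiliary admissible pair controlled by $X_T$ via Sobolev embedding. Combined with the linear Strichartz bound and the smallness above, this yields the self-map and contraction properties of $\Phi$ on a ball in $X_T$ for $T$ small, hence existence. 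Uniqueness in the natural Strichartz class follows by applying the same estimate to the difference of two solutions, and continuous dependence in $H^{s_c}$ is obtained by a standard stability argument on Duhamel's formula.

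The restrictions $p>5$ when $d=1$ and $p>3$ when $d\geq 2$ correspond exactly to $s_c>\max(s_g,0)$, the regime in which $s_c$ is the dominant threshold; they are also precisely what is needed so that (i) the power $p-1$ is large enough for the fractional chain rule applied to $|z|^{p-1}z$ to absorb $s_c$ derivatives (this nonlinearity is only $C^{\lfloor p\rfloor,\,p-\lfloor p\rfloor}$ away from odd integer $p$), and (ii) there is enough slack in the scaling relations for $(q,r)$ and $(\tilde q,\tilde r)$ to remain within the admissible range dictated by the lossy Strichartz theory. The principal obstacle I expect is arranging all four constraints simultaneously: Strichartz admissibility under the $d(1-\sigma)$ derivative loss, criticality of the $\dot H^{s_c}$-scaling, applicability of the fractional chain rule at regularity $s_c$ to a non-smooth nonlinearity, and the Sobolev embedding linking $X_T$ to $L^{\tilde q}_tL^{\tilde r}_x$. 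Once these are balanced, the remainder of the Cazenave--Weissler machinery proceeds routinely.
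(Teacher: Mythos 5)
There is a genuine gap, and it sits exactly at the point the paper identifies as the main difficulty. Your ``heart of the argument'' places the nonlinearity in a dual Strichartz space $L^{q'}_{t}L^{r'}_x$ and feeds it into a retarded Strichartz estimate. But for $e^{it(-\Delta)^\sigma}$ with $\sigma<1$ the dispersive bound \eqref{dispersive estimate with loss} loses $d(1-\sigma)$ derivatives, so the Keel--Tao inhomogeneous estimate with a nontrivial dual pair $(\tilde q',\tilde r')$, $\tilde r'<2$, carries an \emph{additional} loss $d(1-\sigma)(\tfrac12-\tfrac1{\tilde r})$ on the source term: you would have to bound a norm of $|u|^{p-1}u$ at regularity strictly above $s_c$, which cannot be propagated for data that are only in $H^{s_c}$ at critical scaling, and the iteration does not close. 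This is precisely the content of the paper's remark following the critical proof: the only usable retarded estimate is the trivial one into $L^1_tH^{s_c}_x$, which forces the nonlinear term to be estimated as $\|u\|_{L^{p-1}_tL^\infty_x}^{p-1}\|u\|_{L^\infty_tH^{s_c}_x}$.

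Once you are forced into that estimate, the second gap appears: you propose to control the auxiliary norm ``via Sobolev embedding,'' but the required space is $L^{p-1}_tL^\infty_x$, and at critical regularity the embedding $W^{s,r}\hookrightarrow L^\infty$ fails at the endpoint (in the subcritical case the paper gets away with it only because there is slack, producing the $T^{0+}$ factor). The paper's actual proof replaces this step with a new lemma (the bound \eqref{critical Lx-infty bound}, generalizing Lemma 3.1 of \cite{CKSTT}): a Littlewood--Paley decomposition of $\|u\|_{L^{p-1}_tL^\infty_x}^{p-1}$ into a multi-frequency sum, Bernstein estimates at two different exponents, and an $\ell^2$ summation that lands in the square-function-refined norms $\tilde S^{s_c}_{q,r}$; the hypotheses $p>5$ ($d=1$) and $p>3$ ($d\geq2$) enter through the interpolation exponents in that summation, not merely through $s_c>\max(s_g,0)$. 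The contraction is then run in a lower-regularity norm $\tilde S^0$ on a ball defined by $\tilde S^{s_c}$ bounds (a persistence-of-regularity device), with smallness extracted, as you correctly anticipate, from $\|e^{it(-\Delta)^\sigma}u_0\|_{\tilde S^{s_c}_{2,2d/(d-2)}(I)}\to0$ as $T\to0$. Without a substitute for the endpoint $L^\infty_x$ lemma, your scheme does not go through.
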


The proof of Theorem \ref{critical LWP} is based on a new method, improving on estimates in \cite{CKSTT}. This improvement, based on controlling the nonlinearity in a suitable space, is necessary due to the loss of derivatives in the Strichartz estimates. 

As a by-product, we also prove small data scattering. 
\begin{theorem}[Small data scattering]\label{scattering}
Suppose that
\begin{align*}
\left\{\begin{aligned}
&p>5&&\text{ when }d=1,\\
&p>3&&\text{ when }d\geq 2.
\end{aligned}
\right.
\end{align*}
Then, there exists $\delta>0$ such that if $\|u_0\|_{H^{s_c}}<\delta$, then $u(t)$ scatters in $H^{s_c}$. Precisely, there exist $u_\pm \in H^{s_c}$ such that
$$\lim_{t\to\pm\infty}\|u(t)-e^{it(-\Delta)^\sigma}u_\pm\|_{H^{s_c}}=0.$$
\end{theorem}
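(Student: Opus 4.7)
The plan is to bootstrap the fixed-point construction used for Theorem \ref{critical LWP} from a local interval to the whole real line, and then extract the asymptotic states from the Duhamel formula. Let $\Phi_{u_0}(u)(t)=e^{it(-\Delta)^\sigma}u_0-i\mu\int_0^te^{i(t-s)(-\Delta)^\sigma}|u|^{p-1}u\,ds$ denote the Duhamel map, and let $X$ be the Strichartz-type space (with loss of derivatives absorbed as in the proof of Theorem \ref{critical LWP}) in which the nonlinear estimates close at the scaling-critical level $s_c$. First I would observe that, under the hypotheses on $(d,p)$, the same admissible pairs used to close the contraction on a short interval are in fact \emph{global} Strichartz norms, i.e. controlled on all of $\R$. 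Hence by standard Strichartz estimates the free evolution satisfies $\|e^{it(-\Delta)^\sigma}u_0\|_X\lesssim \|u_0\|_{H^{s_c}}$ on $\R$, so choosing $\|u_0\|_{H^{s_c}}<\delta$ small enough makes this free norm arbitrarily small. A contraction argument identical to the one in Theorem \ref{critical LWP}, but performed on a ball in $X(\R)$ rather than $X(I)$ for a bounded interval $I$, then produces a unique global solution $u\in C(\R;H^{s_c})\cap X(\R)$ with $\|u\|_{X(\R)}\lesssim \delta$.

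Once the global solution is in hand, the scattering states are constructed in the usual way. Define formally
\[
u_+ \defeq u_0-i\mu\int_0^{\infty}e^{-is(-\Delta)^\sigma}|u(s)|^{p-1}u(s)\,ds,
\]
so that $e^{it(-\Delta)^\sigma}u_+-u(t)=i\mu\int_t^{\infty}e^{i(t-s)(-\Delta)^\sigma}|u|^{p-1}u\,ds$. Applying the inhomogeneous Strichartz estimate to the tail on the time interval $(t,\infty)$ and using the nonlinear estimate developed for Theorem \ref{critical LWP} (which controls $\||u|^{p-1}u\|_{N(I)}$ by $\|u\|_{X(I)}^p$ in the appropriate dual-Strichartz space $N$), one obtains
\[
\|u(t)-e^{it(-\Delta)^\sigma}u_+\|_{H^{s_c}}\lesssim \|u\|_{X((t,\infty))}^p.
\]
Since $\|u\|_{X(\R)}<\infty$, absolute continuity of the norm forces $\|u\|_{X((t,\infty))}\to 0$ as $t\to\infty$, and the same Cauchy criterion shows $u_+\in H^{s_c}$. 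The construction of $u_-$ is symmetric.

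The main obstacle, as throughout the paper, is the $d(1-\sigma)$ loss of derivatives in the dispersive estimate \eqref{dispersive estimate with loss}, which forbids simply plugging $|u|^{p-1}u$ into the standard $L_t^{q'}L_x^{r'}$ endpoint on the right of Strichartz. This is exactly the difficulty overcome in Theorem \ref{critical LWP} by measuring the nonlinearity in the auxiliary space tailored to the lossy estimate; the scattering proof inherits this space verbatim and the only new input is that the constructed norms are finite on $\R$ rather than on a bounded interval, which is automatic under the subcritical/critical power conditions stated. No further smallness beyond what is needed to close the global contraction is required.
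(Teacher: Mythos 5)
Your proposal follows essentially the same route as the paper: a global-in-time small-data contraction in the same critical spaces $\tilde{S}^{s_c}$ used for Theorem \ref{critical LWP} (possible since $\|e^{it(-\Delta)^\sigma}u_0\|_{\tilde{S}^{s_c}(\R)}\lesssim\|u_0\|_{H^{s_c}}<\delta$), followed by the standard Duhamel/Cauchy-criterion extraction of $u_\pm$ using the nonlinear bound $\||u|^{p-1}u\|_{L_t^1H_x^{s_c}}\lesssim\|u\|_{L_t^{p-1}L_x^\infty}^{p-1}\|u\|_{L_t^\infty H_x^{s_c}}$. The only imprecision is writing the tail estimate as $\|u\|_{X((t,\infty))}^p$: the $L_t^\infty H_x^{s_c}$ component of $X$ does not vanish on tails, so one should (as the paper does) split off the time-integrated factor $\|u\|_{L_{t'\in(t,\infty)}^{p-1}L_x^\infty}^{p-1}$, which does tend to zero, times the globally bounded $\|u\|_{L_t^\infty H_x^{s_c}}$ --- with that splitting your argument is exactly the paper's.
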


\begin{remark}
Contrary to the case $\sigma\neq\frac{1}{2}$, when $\sigma=\frac{1}{2}$, the fractional NLS does not have small data scattering. See \cite{krieger}. 
\end{remark}

Finally, our last theorem is the ill-posedness result. Note that our result is not optimal, since one should expect ill-posedness in $H^s$ up to $s_g=\frac{1-\sigma}{2}$, which is nonnegative. We hope to come back to this issue in a forthcoming work. 

\begin{theorem}[Ill-posedness]\label{ill-posedness}
Let $d=1,2$ or $3$ and $\sigma\in(\frac{d}{4},1)$. If $p$ is not an odd integer, we further assume that $p\geq k+1$, where $k$ is an integer larger than $\frac{d}{2}$. Then, $(\textup{NLS}_\sigma)$ is ill-posed in $H^s$ for $s\in (s_c,0)$.
\end{theorem}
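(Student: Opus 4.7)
My plan is to establish ill-posedness by showing that the data-to-solution map fails to be uniformly continuous on $H^s$, following the decoherence/Galilean argument of Christ--Colliander--Tao and Carles adapted to the fractional setting. Concretely, for each large parameter $N$ I will construct a pair $u_{1,N},u_{2,N}$ of $(\textup{NLS}_\sigma)$-solutions whose initial data are bounded in $H^s$, with $\|u_{1,N}(0)-u_{2,N}(0)\|_{H^s}\to 0$, but $\|u_{1,N}(t_N)-u_{2,N}(t_N)\|_{H^s}\gtrsim 1$ for some $t_N\to 0^+$.

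The main tool is a pseudo-Galilean ansatz: for $N\gg 1$, write
\[
u(t,x)=e^{i(Nx_1+tN^{2\sigma})}\,v\bigl(t,\,x+2\sigma N^{2\sigma-1}t\,e_1\bigr).
\]
Expanding $|Ne_1+\eta|^{2\sigma}=N^{2\sigma}+2\sigma N^{2\sigma-1}\eta_1+R_N(\eta)$ with $R_N(\eta)=O(|\eta|^2N^{2\sigma-2})$ on $|\eta|\ll N$, one finds that $u$ solves $(\textup{NLS}_\sigma)$ if and only if $v$ satisfies
\[
i\partial_t v+\tilde L_N v+\mu|v|^{p-1}v=0,
\]
where $\tilde L_N$ is the Fourier multiplier of symbol $R_N$. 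Unlike the genuine Galilean case $\sigma=1$ (where $\tilde L_N=-\Delta$), the operator $\tilde L_N$ is of strictly lower order on Fourier-localized $v$: one has $\|\tilde L_N v\|_{L^2}\les N^{2\sigma-2}\|v\|_{H^2}$, which tends to $0$ as $N\to\infty$ because $\sigma<1$.

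Next, for $v(0)=a\phi$ with a fixed Schwartz bump $\phi$ and amplitude $a>0$, I approximate $v$ by the pointwise ODE solution
\[
\tilde v_a(t,y):=a\phi(y)\exp\bigl(i\mu t\,a^{p-1}|\phi(y)|^{p-1}\bigr)
\]
of $i\partial_t\tilde v_a+\mu|\tilde v_a|^{p-1}\tilde v_a=0$. A Gronwall estimate applied to the forced equation for $r:=v-\tilde v_a$ controls $\|r\|_{L^2}$ by $N^{2\sigma-2}$ times powers of $\|\tilde v_a\|_{H^k}$ for some integer $k>d/2$. The hypothesis $p\ge k+1$ for non-odd-integer $p$ guarantees that $z\mapsto|z|^{p-1}$ is of class $C^k$, so the Sobolev norm of the nonlinear phase $|\phi|^{p-1}$ is controllable via composition estimates (for $p$ an odd integer the nonlinearity is polynomial and no extra regularity is needed); the restriction $d\le 3$ keeps these Sobolev indices moderate.

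For the decoherence, compare two amplitudes $a_1=a$ and $a_2=a(1+\epsilon)$. Since $\hat u_j(t)$ is concentrated near $\xi=Ne_1$, $L^2$-conservation for $v_j$ yields $\|u_j(t)\|_{H^s}\sim N^s a\|\phi\|_{L^2}$ and $\|u_1(t)-u_2(t)\|_{H^s}\sim N^s\|v_1(t)-v_2(t)\|_{L^2}$. At the decoherence time $t\sim(\epsilon a^{p-1})^{-1}$ the phase differential $t\,(a_1^{p-1}-a_2^{p-1})|\phi|^{p-1}$ is of order $1$ on $\{\phi\neq 0\}$, so $\|\tilde v_1(t)-\tilde v_2(t)\|_{L^2}\gtrsim a\|\phi\|_{L^2}$. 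Selecting $a=N^{|s|}$ makes $\|u_j(0)\|_{H^s}\sim 1$, and choosing $\epsilon=N^{-|s|(p-2)+\gamma}$ with a small $\gamma>0$ gives $t_N\sim N^{-|s|-\gamma}\to 0$, $\|u_1(0)-u_2(0)\|_{H^s}\sim N^{-|s|(p-1)+\gamma}\to 0$, and $\|u_1(t_N)-u_2(t_N)\|_{H^s}\gtrsim 1$. The main obstacle is pushing the ODE approximation past the decoherence time: the gain $N^{2\sigma-2}$ from $\tilde L_N$ must beat the growth $\|\tilde v_a\|_{H^k}\les(ta^{p-1})^k$ together with the Gronwall factor coming from $|v|^{p-1}v$, and the combined hypotheses $\sigma>d/4$, $d\le 3$, and $p\ge k+1$ with $k>d/2$ are precisely what is needed to make these competing bounds compatible across the full range $s\in(s_c,0)$.
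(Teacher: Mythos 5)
Your architecture --- conjugate by a boost to frequency $N$ so that the extra operator $\tilde L_N$ is of lower order, approximate the conjugated solution by the explicit ODE profile, and decohere two nearby amplitudes --- is the same family of argument as the paper's (small dispersion analysis, Lemma \ref{small dispersion analysis}; pseudo-Galilean almost-invariance, Lemma \ref{Pseudo-Galilean transformation}; decoherence, Lemma \ref{Decoherence}), and your observation that $\|\tilde L_N v\|_{L^2}\lesssim N^{2\sigma-2}\|v\|_{H^2}$ is a legitimate reformulation of the boost error. The genuine gap is exactly at the step you call ``the main obstacle,'' and it is not closed by the hypotheses you invoke. The Gronwall estimate for $r=v-\tilde v_a$ carries the factor $\exp\bigl(C\int_0^{t}\|\tilde v_a(s)\|_{L^\infty}^{p-1}\,ds\bigr)\sim\exp\bigl(C\,t\,a^{p-1}\bigr)$, which at your decoherence time $t_N\sim(\epsilon a^{p-1})^{-1}$ equals $\exp(C\epsilon^{-1})$. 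With $\epsilon=N^{-|s|(p-2)+\gamma}$ this is $\exp\bigl(CN^{|s|(p-2)-\gamma}\bigr)$, super-polynomial in $N$, while the only gain available is the fixed polynomial $N^{2\sigma-2}$, further eroded by the polynomial losses $\|\tilde v_a\|_{H^{k+2}}\lesssim a\,\epsilon^{-(k+2)}$. So the ODE approximation cannot be justified up to $t_N$ for any admissible $\sigma,d,p$: the hypotheses $\sigma>\frac d4$, $d\le 3$, $p\ge k+1$ play a different role (in the paper, $\sigma>\frac d4$ makes the boost error $\nu^{2\sigma-\frac d2}$ small in \eqref{II(s) estimate}, and $k>\frac d2$, $p\ge k+1$ give the $H^k$ composition/well-posedness estimates behind Lemma \ref{small dispersion analysis}). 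Two smaller slips: the initial difference is $\sim N^{-|s|(p-2)+\gamma}$, not $N^{-|s|(p-1)+\gamma}$, and for $d=1$, $p=2$ (allowed by the theorem) your choice gives $\epsilon=N^{\gamma}\not\to0$.

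The missing idea is the decoupling of the amplitude separation from the frequency/scaling parameters, so that decoherence happens inside the only window on which the ODE approximation is available, namely a logarithmically long one. The paper fixes $a,a'\in[\tfrac12,1]$ with $|a-a'|$ small but independent of $\nu,\lambda,v$; then the ODE phases decohere at a time $T=O(|a-a'|^{-1})$ which is $O(1)$ in the slow variables, and the small dispersion analysis is valid for $|t|\le c|\log\nu|^{c}$, which contains $T$ once $\nu$ is small. The exact scaling $u\mapsto\lambda^{-2\sigma/(p-1)}u(\lambda^{-2\sigma}t,\lambda^{-1}\nu x)$ and the boost $\mathcal G_v$, calibrated by $\lambda^{-2\sigma/(p-1)}|v|^{s}(\lambda/\nu)^{d/2}=\epsilon$, then set the $H^s$ sizes and send the physical time $\lambda^{2\sigma}T$ to $0$, and the pseudo-Galilean error is absorbed with an $O(\nu^{\delta})$ loss; failure of uniform continuity follows by letting $|a-a'|\to0$ first and $\nu\to0$ afterwards. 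If you wish to keep your single-profile, large-amplitude normalization, you must at least take the relative separation no smaller than $c/\log N$ (so the Gronwall factor is $N^{Cc}$ and can be beaten by $N^{2\sigma-2}$ with the polynomial losses tracked), and verify in addition that $v_1,v_2$ stay frequency-localized well below $N$ so that $\|u_1-u_2\|_{H^s}\sim N^{s}\|v_1-v_2\|_{L^2}$; with $\epsilon$ a negative power of $N$ the scheme as written does not close.
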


An interesting feature of the previous ill-posedness result is the fact that, contrary to the standard NLS equation ($\sigma=1$) there is no exact Galilean invariance. However, one can introduce a new ``pseudo-Galilean invariance" which is enough to our purposes. More precisely, for $v\in\mathbb{R}^d$, we define the  transformation
$$\mathcal{G}_vu(t,x)=e^{-iv\cdot x}e^{it|v|^{2\sigma}}u(t,x-2t\sigma|v|^{2(\sigma-1)}v).$$
Note that when $\sigma=1$, $\mathcal{G}_v$ is simply a Galilean transformation, and that NLS is invariant under this transformation, that is, if $u(t)$ solves NLS, so does $\mathcal{G}_vu(t)$. However, when $\sigma\neq1$, $(\textup{NLS}_\sigma)$ is not exactly symmetric with respect to pseudo-Galilean transformations. This opens the construction of solitons for $(\textup{NLS}_\sigma)$ which happen to be different from the ones constructed in the standard case $\sigma=1$. Indeed, if we search for exact solutions of the type
\begin{equation}
u(t,x)=e^{it(|v|^{2\sigma}-\omega^{2\sigma})} e^{-iv\cdot x}Q_\omega(x-2t\sigma|v|^{2(\sigma-1)}v),
\end{equation}
then the profile $Q_\omega$ solves the pseudo-differential equation
\begin{equation}\label{Q equation}
\mathcal{P}_vQ_\omega+\omega^{2\sigma}Q_\omega-|Q_\omega|^{p-1}Q_\omega=0,
\end{equation}
where
\begin{equation}
\mathcal{P}_v=e^{iv\cdot x}(-\Delta)^\sigma e^{-iv\cdot x}-|v|^{2\sigma}-2i\sigma|v|^{2\sigma-2}v\cdot\nabla,
\end{equation}
i.e., $\mathcal{P}_v$ is a Fourier multiplier $\widehat{\mathcal{P}_v f}(\xi)=p_v(\xi)\hat{f}(\xi)$, wiht symbol
\begin{equation}
p_v(\xi)=|\xi-v|^{2\sigma}-|v|^{2\sigma}+2\sigma|v|^{2\sigma-2}v\cdot\xi.
\end{equation}
We plan to come back to this issue in future works.

\section{Strichartz Estimates}

In this section, we review Strichartz estimates for the linear fractional Schr\"odinger operators. 
We say that $(q,r)$ is \textit{admissible} if
$$\frac{2}{q}+\frac{d}{r}=\frac{d}{2},\quad 2\leq q,r\leq\infty,\quad (q,r,d)\neq(2,\infty,2).$$
We define the Strichartz norm by
$$\|u\|_{S_{q,r}^s(I)}:=\||\nabla|^{-d(1-\sigma)(\frac{1}{2}-\frac{1}{r})}u\|_{L_{t\in I}^qW_x^{s,r}},$$
where $I=[0,T)$. Let $\psi: \mathbb{R}^d\to [0,1]$ be a compactly supported smooth function such that $\sum_{N\in 2^{\mathbb{Z}}}\psi_N=1$, where $\psi_N(\xi)=\psi(\frac{\xi}{N})$. For dyadic $N\in 2^{\mathbb{Z}}$, let $P_N$ be a Littlewood-Paley projection, that is, $\widehat{P_N f}(\xi)=\psi(\frac{\xi}{N})\hat{f}(\xi)$.  Then, we define a slightly stronger Strichartz norm by 
$$\|u\|_{\tilde{S}_{q,r}^s(I)}:=\Big(\sum_{N\in2^\mathbb{Z}}\|P_N(|\nabla|^{-d(1-\sigma)(\frac{1}{2}-\frac{1}{r})}u)\|_{L_{t\in I}^qW_x^{s,r}}^2\Big)^{1/2}.$$

\begin{proposition}[Strichartz estimates \cite{COX}]\label{Strichartz}
For an admissible pair $(q,r)$, we have
\begin{align*}
\|e^{it(-\Delta)^\sigma}u_0\|_{S_{q,r}^s(I)}, \|e^{it(-\Delta)^\sigma}u_0\|_{\tilde{S}_{q,r}^s(I)}&\lesssim\|u_0\|_{H^s},\\
\Big\|\int_0^t e^{i(t-s)(-\Delta)^\sigma}F(s)ds\Big\|_{S_{q,r}^s(I)}&\lesssim \|F\|_{L_{t\in I}^1H_x^{s}},\\
\Big\|\int_0^t e^{i(t-s)(-\Delta)^\sigma}F(s)ds\Big\|_{\tilde{S}_{q,r}^s(I)}&\lesssim \|F\|_{L_{t\in I}^1H_x^{s}}.
\end{align*}
\end{proposition}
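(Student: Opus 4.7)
The plan is to follow the standard $TT^*$ machinery, adapted to account for the derivative loss in the fractional dispersive estimate, and then to sum over Littlewood--Paley pieces.

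First, I would establish a frequency-localized dispersive estimate. For $f$ with Fourier support in $\{|\xi|\sim N\}$, the propagator $e^{it(-\Delta)^\sigma}$ has phase $t|\xi|^{2\sigma}$ whose Hessian has size $\sim N^{2\sigma-2}$ on this dyadic annulus. A stationary phase / van der Corput argument gives
\[
\|e^{it(-\Delta)^\sigma}P_N f\|_{L^\infty_x}\lesssim |t|^{-d/2}N^{d(1-\sigma)}\|P_N f\|_{L^1_x}.
\]
This is the dispersive estimate with loss alluded to before the proposition. Combined with the trivial $L^2$ conservation, Riesz--Thorin interpolation yields
\[
\|e^{it(-\Delta)^\sigma}P_N f\|_{L^r_x}\lesssim |t|^{-d(1/2-1/r)}N^{2d(1-\sigma)(1/2-1/r)}\|P_N f\|_{L^{r'}_x}.
\]

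Second, I would apply the Keel--Tao abstract Strichartz theorem to the family $U(t)=e^{it(-\Delta)^\sigma}P_N$, with effective decay rate $d/2$ and the weight $N^{2d(1-\sigma)(1/2-1/r)}$. For an admissible pair $(q,r)$ this produces
\[
\|e^{it(-\Delta)^\sigma}P_N f\|_{L^q_t L^r_x}\lesssim N^{d(1-\sigma)(1/2-1/r)}\|P_N f\|_{L^2_x},
\]
and analogously the dual and inhomogeneous versions. The weight $|\nabla|^{-d(1-\sigma)(1/2-1/r)}$ that appears in the definitions of $S^s_{q,r}$ and $\tilde S^s_{q,r}$ is precisely what absorbs the loss $N^{d(1-\sigma)(1/2-1/r)}$.

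Third, I would pass from the frequency-localized bound to the global bound by Littlewood--Paley. Taking $\ell^2_N$ of the square of the localized inequality (after applying $\langle\nabla\rangle^s$) yields the $\tilde S^s_{q,r}$ estimate immediately. The $S^s_{q,r}$ estimate then follows because, for $2\le r<\infty$, Minkowski and the Littlewood--Paley square-function characterization give
\[
\|u\|_{L^q_tW^{s,r}_x}\lesssim \Bigl(\sum_N \|P_N u\|_{L^q_tW^{s,r}_x}^2\Bigr)^{1/2},
\]
while the endpoint $(q,r)=(2,2^*)$ follows from Keel--Tao directly. For the inhomogeneous estimate, by Christ--Kiselev it suffices to handle the untruncated Duhamel integral, at which point Minkowski in $s$ combined with the homogeneous bound applied to $F(s)$ gives the $L^1_t H^s_x\to S^s_{q,r}$ mapping.

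The main obstacle is exactly the $N^{d(1-\sigma)}$ loss in the dispersive estimate: without the tailored weight in the definition of the Strichartz norms, the dyadic summation would diverge. Once the norms are set up so that the weight precisely cancels the loss at each frequency, the remaining steps are the routine $TT^*$ argument, Keel--Tao endpoint, Littlewood--Paley summation, and Christ--Kiselev.
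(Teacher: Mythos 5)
Your proposal is correct and follows essentially the same route as the paper's sketch: the frequency-localized dispersive estimate $\|e^{it(-\Delta)^\sigma}P_N\|_{L^1\to L^\infty}\lesssim N^{d(1-\sigma)}|t|^{-d/2}$ via stationary phase and scaling, Keel--Tao applied to each Littlewood--Paley piece with the weight $|\nabla|^{-d(1-\sigma)(\frac12-\frac1r)}$ absorbing the loss, and finally square-summation over dyadic frequencies. Your added details (interpolation with $L^2$ conservation, the Minkowski/square-function step for $S^s_{q,r}$, Christ--Kiselev for the Duhamel term) simply make explicit what the paper leaves implicit.
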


\begin{proof}[Sketch of Proof]
By the standard stationary phase estimate, one can show that
$$\|e^{it(-\Delta)^\sigma}P_1\|_{L^1\to L^\infty}\lesssim|t|^{-\frac{d}{2}},$$
and by scaling,
\begin{equation}\label{dispersive estimate with loss}
\|e^{it(-\Delta)^\sigma}P_N\|_{L^1\to L^\infty}\lesssim N^{d(1-\sigma)}|t|^{-\frac{d}{2}}.
\end{equation}
Then, it follows from the argument of Keel-Tao \cite{KT} that for any $I\subset\mathbb{R}$,
\begin{align*}
\|e^{it(-\Delta)^\sigma}P_N(|\nabla|^{-d(1-\sigma)(\frac{1}{2}-\frac{1}{r})}u_0)\|_{L_{t\in I}^q W_x^{s,r}}&\lesssim\|P_Nu_0\|_{H^s},\\
\Big\|\int_0^t e^{i(t-s)(-\Delta)^\sigma}P_N(|\nabla|^{-d(1-\sigma)(\frac{1}{2}-\frac{1}{r})}F)(s)ds\Big\|_{L_{t\in I}^qW_x^{s,r}}&\lesssim \|P_NF\|_{L_{t\in I}^1H_x^{s}}.
\end{align*}
Squaring the above inequalities and summing them over all dyadic numbers in $2^{\mathbb{Z}}$, we prove Strichartz estimates.
\end{proof}

The loss of derivatives is due to the Knapp phenomenon (see \cite{zihua}). However, in the radial case, one can overcome this loss as proved in \cite{zihua}, restricting then the admissible powers of the fractional laplacian. Indeed, in \cite{zihua}, this is proved that one has optimal Strichartz estimates if $\sigma \in (d/(2d-1),1)$. In particular, the number $d/(2d-1)$ is larger than $1/2$ and there is a gap between the Strichartz estimates for the wave operator $\sigma=1/2$ and the one occuring for higher powers. This issue suggests that a new phenomenon might occur for this range of powers.

\section{Local Well-posedness}

We establish local well-posedness of the fractional NLS by the standard contraction mapping argument based on Strichartz estimates. Due to loss of regularity in Strichartz estimates, our proof relies on the $L_x^\infty$ bounds (see Lemma 3.2 and 3.3).

\subsection{Subcritical cases}

First, we consider the case that $d=1$ and $2\leq p<5$. In this case, the equation is scaling-subcritical in $H^s$ for $s>s_g$, since $s_g>s_c$. We remark that in the proof, we control the $L_{t\in I}^4L_x^\infty$ norm simply by Strichartz estimates (see \eqref{1d Strichartz 1} and \eqref{1d Strichartz 2}).

\begin{proof}[Proof of Theorem \ref{subcritical LWP} when $d=1$ and $2\leq p<5$]
We define
$$\Phi_{u_0}(u):=e^{it(-\Delta)^\sigma}u_0+ i\mu\int_0^t e^{i(t-s)(-\Delta)^\sigma}(|u|^{p-1}u)(s)ds.$$
Let
$$\|u\|_{X^s}:=\|u\|_{L_{t\in I}^\infty H_x^s\cap L_{t\in I}^4 L_x^\infty},$$
where $I=[0,T)$. Then, applying the 1d Strichartz estimates
\begin{align}
\|e^{it(-\Delta)^\sigma}u_0\|_{L_{t\in I}^4 L_x^\infty}&\lesssim\|u_0\|_{\dot{H}^{s_g}}\label{1d Strichartz 1},\\
\|e^{it(-\Delta)^\sigma}u_0\|_{L_{t\in I}^\infty H_x^s}&\lesssim\|u_0\|_{H^s},\nonumber\\
\Big\|\int_0^t e^{i(t-s)(-\Delta)^\sigma}F(s)ds\Big\|_{L_{t\in I}^4L_x^\infty}&\lesssim \|F\|_{L_{t\in I}^1\dot{H}_x^{s_g}}\label{1d Strichartz 2},\\
\Big\|\int_0^t e^{i(t-s)(-\Delta)^\sigma}F(s)ds\Big\|_{L_{t\in I}^\infty H_x^s}&\lesssim \|F\|_{L_{t\in I}^1H_x^s}\nonumber,
\end{align}
we get
$$\|\Phi_{u_0}(u)\|_{X^s}\lesssim \|u_0\|_{H^s}+\||u|^{p-1}u\|_{L_{t\in I}^1H_x^s}.$$
By the fractional chain rule
\begin{equation}\label{chain rule}
\||\nabla|^s F(u)\|_{L^q}\lesssim \|F'(u)\|_{L^{p_1}}\||\nabla|^s u\|_{L^{p_2}},
\end{equation}
where $s>0$ and $\frac{1}{q}=\frac{1}{p_1}+\frac{1}{p_2}$, and H\"older inequality, we obtain
$$\||u|^{p-1}u\|_{L_{t\in I}^1H_x^s}\lesssim \Big\| \||u|^{p-1}\|_{L_x^\infty}\|u\|_{H_x^s}\Big\|_{L_{t\in I}^1} \leq T^{\frac{5-p}{4}}\|u\|_{L_{t\in I}^4L_x^\infty}^{p-1}\|u\|_{L_{t\in I}^\infty H_x^s}.$$
For the fractional chain rule \eqref{chain rule}, we refer \cite{CW}, for example. We remark that one can choose $p_1=\infty$ in \eqref{chain rule}. Indeed, this can be proved by a little modification of the last step in the proof of Proposition 3.1 in \cite{CW}. Thus, we have
$$\|\Phi_{u_0}(u)\|_{X^s}\lesssim \|u_0\|_{H^s}+T^{\frac{5-p}{4}} \|u\|_{X^s}^p.$$
Similarly, by Strichartz estimates,
$$\|\Phi_{u_0}(u)-\Phi_{u_0}(v)\|_{X^s}\lesssim \||u|^{p-1}u-|v|^{p-1}v\|_{L_{t\in I}^1H_x^s}.$$
Then, applying the fractional Leibniz rule and the fractional chain rule in \cite{CW}, we get 
\begin{align*}
\||u|^{p-1}u-|v|^{p-1}v\|_{H_x^s}&=\Big\|\int_0^1 p|v+t(u-v)|^{p-1}(u-v) dt\Big\|_{H_x^s}\\
&\leq p\int_0^1\||v+t(u-v)|^{p-1}(u-v) \|_{H_x^s}dt\\
&\lesssim\int_0^1\|v+t(u-v)\|_{L_x^\infty}^{p-1}\|u-v\|_{H_x^s}\\
&\ \ \ +\||v+t(u-v)|^{p-1}\|_{H_x^s}\|u-v\|_{L_x^\infty}dt\\
&\lesssim\int_0^1\|v+t(u-v)\|_{L_x^\infty}^{p-1}\|u-v\|_{H_x^s}\\
&\ \ \ +\|v+t(u-v)\|_{L_x^\infty}^{p-2}\|v+t(u-v)\|_{H_x^s}\|u-v\|_{L_x^\infty}dt\\
&\leq (\|u\|_{L_x^\infty}^{p-1}+\|v\|_{L_x^\infty}^{p-1})\|u-v\|_{H_x^s}\\
&\ \ \ +(\|u\|_{L_x^\infty}^{p-2}+\|v\|_{L_x^\infty}^{p-2})(\|u\|_{H_x^s}+\|v\|_{H_x^s})\|u-v\|_{L_x^\infty}.
\end{align*}
Thus, it follows that
\begin{align*}
&\|\Phi_{u_0}(u)-\Phi_{u_0}(v)\|_{X^s}\\
&\lesssim T^{\frac{5-p}{4}}\Big\{(\|u\|_{L_{t\in I}^4L_x^\infty}^{p-1}+\|v\|_{L_{t\in I}^4L_x^\infty}^{p-1})\|u-v\|_{L_{t\in I}^\infty H_x^s}\\
&\quad\quad\quad+(\|u\|_{L_{t\in I}^4L_x^\infty}^{p-2}+\|v\|_{L_{t\in I}^4L_x^\infty}^{p-2})(\|u\|_{L_{t\in I}^\infty H_x^s}+\|v\|_{L_{t\in I}^\infty H_x^s})\|u-v\|_{L_{t\in I}^4L_x^\infty}\Big\}\\
&\lesssim T^{\frac{5-p}{4}} (\|u\|_{X^s}^{p-1}+\|v\|_{X^s}^{p-1})\|u-v\|_{X^s}.
\end{align*}
Choosing sufficiently small $T>0$, we conclude that $\Phi_{u_0}$ is a contraction on a ball
$$B:=\{u: \|u\|_{X^s}\leq 2\|u_0\|_{H^s}\}$$
equipped with the norm $\|\cdot\|_{X^s}$.
\end{proof}

Next, we will prove Theorem \ref{subcritical LWP} when $d=1$ and $p\geq 5$, or $d\geq 2$ and $p\geq3$. In this case, we do not have a good control on the $L_x^\infty$ norm from Strichartz estimates. Instead, we make use of Sobolev embedding.

\begin{lemma}[$L_{t\in I}^{p-1}L_x^\infty$ bound]
Suppose that $d=1$ and $p\geq5$, or $d\geq 2$ and $p\geq3$. Let $s>s_c$. Then, we have
\begin{equation}\label{subcritical Lx-infty bound}
\|u\|_{L_{t\in I}^{p-1}L_x^\infty}\lesssim T^{0+}\|u\|_{S_{q_0,r_0}^{s}(I)},
\end{equation}
where $(q_0,r_0) =((p-1)^+,\Big(\tfrac{2d(p-1)}{d(p-1)-4}\Big)^-)$ is an admissible pair. Here, we denote by $c^+$ a number larger than $c$ but arbitrarily close to $c$, and similarly for $c^-$.
\end{lemma}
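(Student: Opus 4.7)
The plan is to combine H\"older in time with a Sobolev embedding in space, where the small power of $T$ comes from the ``$+$'' in the exponent of $q_0=(p-1)^+$, and the regularity exponent $s_c$ emerges precisely from offsetting the derivative loss $d(1-\sigma)(\tfrac12-\tfrac{1}{r_0})$ against the Sobolev embedding $\dot W^{\,\cdot\,,r_0}\hookrightarrow L^\infty$.

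First I would verify that $(q_0,r_0)$ is indeed admissible: from $q_0=(p-1)^+$ the condition $\tfrac{2}{q_0}+\tfrac{d}{r_0}=\tfrac{d}{2}$ forces $\tfrac{1}{r_0}=\tfrac12-\tfrac{2}{d(p-1)}$ (up to the ``$-$''), which matches the formula stated; the hypotheses $d=1,\,p\geq 5$ or $d\geq 2,\,p\geq 3$ guarantee $r_0\geq 2$ and, for $d=2$, that $q_0>2$, so the admissibility restriction $(q,r,d)\neq(2,\infty,2)$ is not violated.

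Next, by H\"older in time, since $p-1<q_0$,
\[
\|u\|_{L^{p-1}_{t\in I}L^\infty_x}\les T^{\frac{1}{p-1}-\frac{1}{q_0}}\|u\|_{L^{q_0}_{t\in I}L^\infty_x}=T^{0+}\|u\|_{L^{q_0}_{t\in I}L^\infty_x}.
\]
Setting $\alpha\defeq d(1-\sigma)\bigl(\tfrac12-\tfrac{1}{r_0}\bigr)$ and $v\defeq|\nabla|^{-\alpha}u$, I would then apply the Sobolev embedding $\dot W^{s-\alpha,r_0}(\R^d)\hookrightarrow L^\infty(\R^d)$, which is valid provided $s-\alpha>\tfrac{d}{r_0}$; writing $u=|\nabla|^\alpha v$ this gives
\[
\|u\|_{L^\infty_x}\les \||\nabla|^{s-\alpha}u\|_{L^{r_0}_x}=\||\nabla|^{s}v\|_{L^{r_0}_x}\les \|v\|_{W^{s,r_0}_x}=\||\nabla|^{-\alpha}u\|_{W^{s,r_0}_x}.
\]
Taking $L^{q_0}_{t\in I}$ of both sides and recalling the definition of $S^s_{q_0,r_0}(I)$ yields the claim.

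The only arithmetic to check is that the Sobolev threshold $\alpha+\tfrac{d}{r_0}$ really equals $s_c$. Substituting $\tfrac12-\tfrac{1}{r_0}=\tfrac{2}{d(p-1)}$ and $\tfrac{d}{r_0}=\tfrac{d}{2}-\tfrac{2}{p-1}$ gives
\[
\alpha+\tfrac{d}{r_0}=\tfrac{2(1-\sigma)}{p-1}+\tfrac{d}{2}-\tfrac{2}{p-1}=\tfrac{d}{2}-\tfrac{2\sigma}{p-1}=s_c,
\]
so the hypothesis $s>s_c$ is exactly what is required for the Sobolev step. I do not anticipate any serious obstacle; the mild technical point is that in the endpoint situations $d(p-1)=4$ (i.e.\ $d=1,p=5$ or $d=2,p=3$) one has $r_0=\infty$ on the nose, but since the pair is $(q_0,r_0)=((p-1)^+,r_0^-)$ we work at $r_0<\infty$ slightly, so the standard Sobolev embedding $\dot W^{s-\alpha,r_0}\hookrightarrow L^\infty$ applies without issue.
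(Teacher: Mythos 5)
Your proof is correct and follows essentially the same route as the paper: H\"older in time using $p-1<q_0$ to produce the $T^{0+}$ factor, followed by the Sobolev embedding $\dot W^{s-\alpha,r_0}\hookrightarrow L^\infty$ where $\alpha=d(1-\sigma)(\tfrac12-\tfrac1{r_0})$ is the regularity loss built into $S^s_{q_0,r_0}(I)$. The paper states the embedding condition in the compact form $\tfrac1{r_0}-\tfrac{s-\alpha}{d}<0$ without unwinding the arithmetic; your added computation showing this threshold is exactly $s_c$ is a useful clarification and matches what the paper is implicitly using.
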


\begin{proof}
We observe that
$$\frac{1}{r_0}-\frac{s-d(1-\sigma)(\frac{1}{2}-\frac{1}{r_0})}{d}<0.$$
Thus, by Sobolev inequality,
$$\|u\|_{L_{t\in I}^{p-1}L_x^\infty}\leq T^{0+}\|u\|_{L_{t\in I}^{q_0}L_x^\infty}\lesssim\||\nabla|^{-d(1-\sigma)(\frac{1}{2}-\frac{1}{r_0})}u\|_{L_{t\in I}^{q_0}W_x^{s,r_0}}=\|u\|_{S_{q_0,r_0}^s(I)}.$$
\end{proof}

We also employ a standard persistence of regularity argument.
\begin{lemma}[Persistence of regularity]\label{persistence of regularity}
Let $1<q\leq\infty$, $1<r<\infty$ and $s_1\geq s_2$. Then, $B=\{u: \|u\|_{L_{t\in I}^q W_x^{s_1,r}}\leq R\}$, equipped with the norm $\|\cdot\|_{L_{t\in I}^q W_x^{s_2,r}}$, is a complete metric space.
\end{lemma}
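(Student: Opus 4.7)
The plan is the standard weak-compactness argument: take a Cauchy sequence in the lower-regularity norm, pass to the limit there (using that $L^q_{t}W^{s_2,r}_x$ is itself complete under our hypotheses $1<q\le\infty$, $1<r<\infty$), and then use a weak (or weak-$*$) compactness argument to show that the limit still lies in the ball $B$ of radius $R$ in $L^q_t W^{s_1,r}_x$.

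More concretely, let $\{u_n\}\subset B$ be Cauchy with respect to $\|\cdot\|_{L^q_t W^{s_2,r}_x}$. Since $W^{s_2,r}_x$ is a Banach space and $L^q_t$ of a Banach-valued function space is complete for $1<q\le\infty$, there exists $u\in L^q_t W^{s_2,r}_x$ with $u_n\to u$ in this norm. It remains to show $u\in B$, i.e.\ $\|u\|_{L^q_t W^{s_1,r}_x}\le R$.

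For this, observe that $\{u_n\}$ is bounded by $R$ in $L^q_t W^{s_1,r}_x$. When $1<q<\infty$, this space is reflexive (since $W^{s_1,r}_x$ is reflexive for $1<r<\infty$), so a subsequence $u_{n_k}$ converges weakly to some $v\in L^q_t W^{s_1,r}_x$. When $q=\infty$, identify $L^\infty_t W^{s_1,r}_x$ with the dual of $L^1_t W^{-s_1,r'}_x$ and extract a weak-$*$ convergent subsequence by Banach--Alaoglu. In either case, the continuous embedding $L^q_t W^{s_1,r}_x\hookrightarrow L^q_t W^{s_2,r}_x$ (which holds because $s_1\ge s_2$) shows that $u_{n_k}\rightharpoonup v$ also weakly in $L^q_t W^{s_2,r}_x$. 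But the strong convergence $u_n\to u$ in $L^q_t W^{s_2,r}_x$ forces $v=u$ by uniqueness of weak limits.

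Finally, weak (resp.\ weak-$*$) lower semicontinuity of the norm gives
\[
\|u\|_{L^q_t W^{s_1,r}_x}\;\le\;\liminf_{k\to\infty}\|u_{n_k}\|_{L^q_t W^{s_1,r}_x}\;\le\;R,
\]
so $u\in B$, completing the proof. The only mildly delicate step is the duality identification in the case $q=\infty$ (so that one has Banach--Alaoglu available rather than reflexivity); everything else is routine functional analysis. Note that the hypotheses $q>1$ and $r<\infty$ are used precisely to guarantee reflexivity / a predual, which is the reason $q=1$ and $r=\infty$ are excluded.
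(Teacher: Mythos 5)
Your proof is correct, and at its core it is the same argument as the paper's: weak (or weak-$*$) compactness of the ball in the higher-regularity space plus lower semicontinuity of the norm, with completeness coming from the strong limit in the lower-regularity norm. The implementation differs slightly: the paper does not argue directly in the Bochner space $L^q(I;W^{s_1,r})$ but instead quotes Theorem 1.2.5 of Cazenave, which takes boundedness in $L^p(I;X)$ with $X$ reflexive together with a.e.-in-time weak convergence $f_n(t)\rightharpoonup f(t)$ in a weaker space $Y$ (here supplied, along a subsequence, by the strong convergence in $L^q_t W^{s_2,r}_x$) and directly yields $f\in L^p(I;X)$ with the liminf bound, uniformly in $1<p\le\infty$. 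That black-box route avoids exactly the point you flag as delicate at $q=\infty$: your argument needs the identification $\bigl(L^1_t W^{-s_1,r'}_x\bigr)^*\simeq L^\infty_t W^{s_1,r}_x$, which is legitimate here because $W^{s_1,r}$ is separable and reflexive (hence has the Radon--Nikodym property), and sequential Banach--Alaoglu, which is available because the predual is separable; spelling these two facts out would make your $q=\infty$ case fully rigorous. Conversely, your version is self-contained modulo these standard facts, whereas the paper's rests on the cited theorem, so both are acceptable and essentially interchangeable.
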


\begin{proof}
We recall:
\begin{theorem}[Theorem 1.2.5 in \cite{Caz}]Consider two Banach spaces $X\hookrightarrow Y$ and $1<p,q\leq \infty$. Let $(f_n)_{n\geq 0}$ be a bouned sequence in $L^q(I, Y)$ and let $f: I\to Y$ be such that $f_n(t)\rightharpoonup f(t)$ in $Y$ as $n\to\infty$, for a.e. $t\in I$. If $(f_n)_{n\geq 0}$ is bounded in $L^p(I;X)$ and if $X$ is reflexive, then $f\in L^p(I;X)$ and $\|f\|_{L^p(I;X)}\leq \liminf_{n\to\infty}\|f_n\|_{L^p(I;X)}$.
\end{theorem}

Suppose that $(f_n)_{n=1}^\infty$ be a Cauchy sequence in $B$. Then, $f_n$ converges to $f$ in $L_{t\in I}^q W_x^{s_2,r}$. Moreover, it follows from Theorem 1.2.5 in \cite{Caz} that
$$\|f\|_{L_{t\in I}^q W_x^{s_1,r}}\leq \liminf_{n\to\infty}\|f_n\|_{L_{t\in I}^q W_x^{s_1,r}}\leq R,$$
and thus $f\in B$. Therefore, we conclude that $B$ is complete.
 \end{proof}

\begin{proof}[Proof of Theorem \ref{subcritical LWP} when $d=1$ and $p\geq 5$, or $d\geq 2$ and $p\geq3$]
Define the map $\Phi_{u_0}$ as above, and let 
$$X^\alpha:=L_{t\in I}^\infty H_x^\alpha\cap S_{q_0,r_0}^\alpha(I),$$
where $(q_0, r_0)$ is an admissible pair in Lemma 3.2. Then, by Strichartz estimates, the fractional chain rule and \eqref{subcritical Lx-infty bound}, we get
\begin{align*}
\|\Phi_{u_0}(u)\|_{X^s}&\lesssim \|u_0\|_{H^s}+\||u|^{p-1}u\|_{L_{t\in I}^1H_x^s}\\
&\lesssim \|u_0\|_{H^s}+\|u\|_{L_{t\in I}^{p-1}L_x^\infty}^{p-1}\|u\|_{L_{t\in I}^\infty H_x^s}\\
&\lesssim \|u_0\|_{H^s}+T^{0+}\|u\|_{S_{q_0,r_0}^{s}(I)}^{p-1}\|u\|_{L_{t\in I}^\infty H_x^s}\\
&\leq \|u_0\|_{H^s}+T^{0+}\|u\|_{X^s}^p,
\end{align*}
and similarly
\begin{equation}\label{difference in LWP proof}
\begin{aligned}
\|\Phi_{u_0}(u)-\Phi_{u_0}(v)\|_{X^0}&\lesssim\||u|^{p-1}u-|v|^{p-1}v\|_{L_{t\in I}^1L_x^2}\\
&\lesssim\|(|u|^{p-1}+|v|^{p-1})|u-v|\|_{L_{t\in I}^1L_x^2}\\
&\lesssim(\|u\|_{L_{t\in I}^{p-1}L_x^\infty}^{p-1}+\|v\|_{L_{t\in I}^{p-1}L_x^\infty}^{p-1})\|u-v\|_{L_{t\in I}^\infty L_x^2}\\
&\lesssim T^{0+}(\|u\|_{S_{q_0,r_0}^{s}(I)}^{p-1}+\|v\|_{S_{q_0,r_0}^{s}(I)}^{p-1})\|u\|_{L_{t\in I}^\infty L_x^2}\\
&\lesssim T^{0+}(\|u\|_{X^s}^{p-1}+\|v\|_{X^s}^{p-1})\|u-v\|_{X^0}.
\end{aligned}
\end{equation}
Thus, for sufficiently small $T>0$, $\Phi_{u_0}$ is contractive on a ball
$$B:=\{u: \|u\|_{X^s}\leq 2\|u_0\|_{H^s}\}$$
equipped with the norm $\|\cdot\|_{X^0}$, which is complete by Lemma \ref{persistence of regularity}.
\end{proof}

\begin{remark}
The standard persistence of regularity argument allows us to avoid derivatives in \eqref{difference in LWP proof}. Indeed, for $u\in B$, $\|\la\nabla\ra^su\|_{L_{t\in I}^{p-1}L_x^\infty}$ is not necessarily bounded.
\end{remark}

\subsection{Scaling-critical cases}
In the scaling-critical case, we use the following lemma, which plays the same role as \eqref{subcritical Lx-infty bound}. We note that the norms in the lemma are defined via the Littlewood-Paley projection in order to overcome the failure of the Sobolev embedding $W^{s,p}\hookrightarrow L^q$, $\frac{1}{q}=\frac{1}{p}-\frac{s}{d}$, when $q=\infty$. Lemma 3.3 generalizes \cite[Lemma 3.1]{CKSTT}.

\begin{lemma}[Scaling-critical $L_{t\in I}^{p-1}L_x^\infty$ bound]
\begin{equation}\label{critical Lx-infty bound}
\|u\|_{L_{t\in I}^{p-1}L_x^\infty}^{p-1}\lesssim \left\{\begin{aligned}
&\|u\|_{\tilde{S}_{4,\infty}^{s_c}(I)}^4\|u\|_{\tilde{S}_{\infty,2}^{s_c}(I)}^{p-5}&&\text{ when }d=1\textup{ and }p>5,\\
&\|u\|_{\tilde{S}_{2+,\infty-}^{s_c}(I)}^{2}\|u\|_{\tilde{S}_{\infty,2}^{s_c}(I)}^{p-3}&&\text{ when }d=2\textup{ and }p>3,\\
&\|u\|_{\tilde{S}_{2,\frac{2d}{d-2}}^{s_c}(I)}^{2}\|u\|_{\tilde{S}_{\infty,2}^{s_c}(I)}^{p-3}&&\text{ when }d\geq 3\textup{ and }p>3.
\end{aligned}
\right.
\end{equation}
\end{lemma}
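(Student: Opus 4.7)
The plan is to derive a pointwise-in-$t$ bound on $\|u(t)\|_{L^\infty_x}$ by a Littlewood--Paley frequency split with a threshold $M=M(t)$ optimized to balance low and high frequencies, and then integrate in time using H\"older and Minkowski. First I fix $M>0$ and write
\[
\|u(t)\|_{L^\infty_x} \le \sum_{N \le M} \|P_N u(t)\|_{L^\infty_x} + \sum_{N > M} \|P_N u(t)\|_{L^\infty_x}.
\]
For the low-frequency piece I apply Bernstein $\|P_N f\|_{L^\infty_x} \lesssim N^{d/2}\|P_N f\|_{L^2_x}$ and Cauchy--Schwarz in $N$ (inserting the weight $N^{-s_c}\cdot N^{s_c}$) to get a bound of the form $M^{d/2-s_c}\widetilde{C}(t)$, where $\widetilde{C}(t)^2 = \sum_N N^{2s_c}\|P_N u(t)\|_{L^2_x}^2$. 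The exponent $d/2 - s_c = 2\sigma/(p-1)$ is strictly positive.

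For the high-frequency piece I Bernstein-embed into the space matching the ``low'' Strichartz pair of each case: for $d\ge 3$ this is $\|P_N f\|_{L^\infty_x} \lesssim N^{(d-2)/2}\|P_N f\|_{L^{2d/(d-2)}_x}$; for $d=1$ no Bernstein is needed since $r_0=\infty$ already; for $d=2$ one uses the perturbation $(q_0,r_0)=(2+,\infty-)$ to avoid the forbidden endpoint. Cauchy--Schwarz in $N$ with the weight $N^{s_c-d(1-\sigma)(1/2-1/r_0)}$ coming from the definition of $\tilde{S}^{s_c}_{q_0,r_0}$ then yields
\[
\sum_{N>M} \|P_N u(t)\|_{L^\infty_x} \lesssim M^{-\beta}\,\widetilde{B}(t),
\]
where $\widetilde{B}(t)$ is the Littlewood--Paley square function attached to $\tilde{S}^{s_c}_{q_0,r_0}$ at time $t$. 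A direct computation shows $\beta>0$ precisely when $p>5$ ($d=1$) or $p>3$ ($d\ge 2$), and that $d/2-s_c+\beta$ equals $\sigma/2$ for $d=1$ and $\sigma$ for $d\ge 3$. Optimizing $M(t)=(\widetilde B/\widetilde C)^{1/(d/2-s_c+\beta)}$ produces $\|u(t)\|_{L^\infty_x} \lesssim \widetilde{B}(t)^{\theta}\widetilde{C}(t)^{1-\theta}$, with $\theta(p-1)=4$ for $d=1$ and $\theta(p-1)=2$ for $d\ge 2$.

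To finish, I raise to the $(p-1)$-th power, integrate over $I$, and apply H\"older in $t$ to place $\widetilde{C}$ in $L^\infty_t$ (yielding $\|u\|_{\tilde{S}^{s_c}_{\infty,2}}^{p-5}$ or $\|u\|_{\tilde{S}^{s_c}_{\infty,2}}^{p-3}$) and $\widetilde{B}$ in $L^{q_0}_t$. Minkowski's inequality ($\ell^2_N \hookrightarrow L^{q_0}_t$ for $q_0 \ge 2$) then gives $\|\widetilde{B}\|_{L^{q_0}_t} \le \|u\|_{\tilde{S}^{s_c}_{q_0,r_0}}$, while $\|\widetilde{C}\|_{L^\infty_t} \le \|u\|_{\tilde{S}^{s_c}_{\infty,2}}$ is immediate. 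The main obstacle I expect is purely book-keeping: I have to verify that the two loss exponents $d/2-s_c$ and $\beta$ are simultaneously positive under the stated hypotheses, that the optimization produces $\theta(p-1)$ matching exactly $4$ or $2$ (so the ``low''-type norm appears to the right power and the ``high''-type norm absorbs the remaining $p-5$ or $p-3$), and that the $d=2$ endpoint perturbation is harmless---the extra $N^{0+}$ Bernstein factor can be absorbed into the Cauchy--Schwarz weight precisely because $\beta$ is strictly positive for $p>3$.
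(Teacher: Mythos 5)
Your proof is correct, and it reaches the paper's estimate by a genuinely different combinatorial route. The two building blocks are identical to the paper's: your high-frequency Bernstein bound is exactly \eqref{Bernstein1}, $\|P_Nu(t)\|_{L^\infty_x}\lesssim N^{-\sigma(p-3)/(p-1)}d_N$ with $d_N=\|P_Nu(t)\|_{\dot W^{s_c-(1-\sigma),2d/(d-2)}}$, and your low-frequency bound is \eqref{Bernstein2}, $\|P_Nu(t)\|_{L^\infty_x}\lesssim N^{2\sigma/(p-1)}d_N'$ (note $d/2-s_c=2\sigma/(p-1)$). The difference lies entirely in how these are summed: the paper expands $\bigl[\sum_N\|P_Nu(t)\|_{L^\infty_x}\bigr]^m$ as a multilinear sum over ordered frequencies $N_1\geq\cdots\geq N_m$, introduces the frequency envelopes $\tilde d_N$, and must first reduce to rational $p-1=m/n$ by interpolation; you instead split at a time-dependent threshold $M(t)$, apply Cauchy--Schwarz in $N$ on each side (legitimate precisely because $d/2-s_c=2\sigma/(p-1)>0$ always and $\beta>0$ under the stated hypotheses on $p$), and optimize in $M$. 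Your arithmetic checks out: $d/2-s_c+\beta$ equals $\sigma$ for $d\geq3$ and $\sigma/2$ for $d=1$, so $\theta(p-1)=2$ resp. $4$, i.e.\ $\theta(p-1)=q_0$, which is exactly what the H\"older-in-time step requires, and the final Minkowski step $\ell^2_N\hookrightarrow L^{q_0}_t$-exchange is valid since $q_0\geq2$ (for $d\geq3$, $q_0=2$, so it is even an identity by Fubini). Your version avoids the rationality reduction and the envelope bookkeeping and works directly for all real $p$; the paper's CKSTT-style multilinear expansion is more flexible when one wants to distribute genuinely different norms over the $m$ factors, but since the end result here is a two-norm interpolation, your threshold optimization is the shorter path to the same place. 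The only caveat is cosmetic: in $d=2$ the $\epsilon$-perturbation shifts $\theta(p-1)$ from $2$ to $q_0=2^+$, so your method produces exponents $(2^+,(p-3)^-)$ rather than $(2,p-3)$; given the $\pm$ conventions already present in the statement, this is harmless for the contraction argument.
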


\begin{proof}
We will prove the lemma only when $d\geq3$. By interpolation $\|f\|_{L^{p_\theta}}\leq \|f\|_{L^{p_0}}^\theta \|f\|_{L^{p_1}}^{1-\theta}$, $\frac{1}{p_\theta}=\frac{\theta}{p_0}+\frac{1-\theta}{p_1}$, $0<\theta<1$, it suffices to show the lemma for rational $(p-1)=\frac{m}{n}>2$ with $\gcd(m,n)=1$. First, we estimate
$$A(t)=\Big[\sum_N\|P_{N}u(t)\|_{L_x^\infty}\Big]^m\sim\sum_{N_1\geq\cdots\geq N_m}\prod_{i=1}^m\|P_{N_i}u(t)\|_{L_x^\infty}.$$
Observe from Bernstein's inequality that 
\begin{align}
\|P_Nu(t)\|_{L_x^\infty}&\lesssim N^{-\frac{\sigma(p-3)}{p-1}}d_N\label{Bernstein1},\\
\|P_Nu(t)\|_{L_x^\infty}&\lesssim N^{\frac{2\sigma}{p-1}}d_N'\label{Bernstein2},
\end{align}
where
$$d_N=\|P_Nu(t)\|_{\dot{W}_x^{s_c-(1-\sigma),\frac{2d}{d-2}}},\ d_N'=\|P_Nu(t)\|_{\dot{H}_x^{s_c}}.$$
As a consequence, we have
\begin{equation}\label{Bernstein3}
\|P_Nu(t)\|_{L_x^\infty}\lesssim\Big(N^{-\frac{\sigma(p-3)}{p-1}}d_N\Big)^\theta\Big(N^{\frac{2\sigma}{p-1}}d_N'\Big)^{1-\theta}=N^{\frac{\sigma(p-3)}{(p-1)(p-2)}}(d_N)^\theta (d_N')^{1-\theta},
\end{equation}
where $\theta=\frac{1}{p-2}$. Hence, applying \eqref{Bernstein1} for $i=1,\cdots,n$ and \eqref{Bernstein3} for $i=n+1,\cdots,m$, we bound $A(t)$ by
$$\lesssim\sum_{N_1\geq \cdots\geq N_m} \Big(\prod_{i=1}^nN_i^{-\frac{\sigma(p-3)}{p-1}}d_{N_i}\Big) \Big(\prod_{i=n+1}^m N_i^{\frac{\sigma(p-3)}{(p-1)(p-2)}}(d_{N_i})^\theta (d_{N_i}')^{1-\theta}\Big).$$
For an arbitrarily small $\epsilon>0$, we let
$$\tilde{d}_N=\sum_{N'\in 2^{\mathbb{Z}}} \min\Big(\frac{N}{N'},\frac{N'}{N}\Big)^\epsilon d_{N'},\quad \tilde{d}_N'=\sum_{N'\in 2^{\mathbb{Z}}} \min\Big(\frac{N}{N'},\frac{N'}{N}\Big)^\epsilon d_{N'}'.$$
Then, since $d_N\leq \tilde{d}_N$ and $\tilde{d}_{N_i}\leq (\frac{N_1}{N_{i}})^\epsilon \tilde{d}_{N_1}$ and similarly for primes, $A(t)$ is bounded by
$$\lesssim\sum_{N_1\geq \cdots\geq N_m} \Big(\prod_{i=1}^n N_i^{-\frac{\sigma(p-3)}{p-1}}\Big(\frac{N_1}{N_{i}}\Big)^\epsilon \tilde{d}_{N_1}\Big) \Big(\prod_{i=n+1}^m N_i^{\frac{\sigma(p-3)}{(p-1)(p-2)}}\Big(\frac{N_1}{N_{i}}\Big)^\epsilon (\tilde{d}_{N_1})^\theta (\tilde{d}_{N_1}')^{1-\theta}\Big).
$$
Summing in $N_m, N_{m-1}, ..., N_{n+1}$ and using $m-n=(p-2)n$,
\begin{align*}
A(t)&\lesssim\sum_{N_1\geq \cdots\geq N_n} \Big(\prod_{i=1}^n N_i^{-\frac{\sigma(p-3)}{p-1}}\Big(\frac{N_1}{N_{i}}\Big)^\epsilon \tilde{d}_{N_1}\Big)\\
&\quad\quad\quad\quad\times N_n^{\frac{\sigma(p-3)(m-n)}{(p-1)(p-2)}}\Big(\frac{N_1}{N_{n}}\Big)^{(m-n)\epsilon} (\tilde{d}_{N_1})^{(m-n)\theta} (\tilde{d}_{N_1}')^{(m-n)(1-\theta)}\\
&=\sum_{N_1\geq \cdots\geq N_n} \Big(\prod_{i=1}^n N_i^{-\frac{\sigma(p-3)}{p-1}}\Big(\frac{N_1}{N_{i}}\Big)^\epsilon \tilde{d}_{N_1}\Big)\\
&\quad\quad\quad\quad\times N_n^{\frac{\sigma(p-3)n}{p-1}}\Big(\frac{N_1}{N_{n}}\Big)^{(p-2)n\epsilon} (\tilde{d}_{N_1})^{(m-n)\theta} (\tilde{d}_{N_1}')^{(m-n)(1-\theta)},
\end{align*}
and then summing in $N_n, N_{n-1}, ..., N_1$, we obtain that
$$A(t)\lesssim\sum_{N_1} (\tilde{d}_{N_1})^{n+(m-n)\theta} (\tilde{d}_{N_1}')^{(m-n)(1-\theta)}=\sum_{N_1} (\tilde{d}_{N_1})^{2n} (\tilde{d}_{N_1}')^{m-2n},$$
which is, by H\"older inequality and Young's inequality, bounded by
\begin{align*}
&\lesssim\|(\tilde{d}_{N})^{2n} \|_{\ell_N^2}\|(\tilde{d}_{N}')^{m-2n}\|_{\ell_N^{2}}=\|\tilde{d}_{N}\|_{\ell_N^{4n}}^{2n}\|\tilde{d}_{N}'\|_{\ell_N^{2(m-2n)}}^{m-2n}\\
&\leq \|\tilde{d}_{N}\|_{\ell_N^2}^{2n}\|\tilde{d}_{N}'\|_{\ell_N^2}^{m-2n}\lesssim \|d_{N}\|_{\ell_N^2}^{2n}\|d_{N}'\|_{\ell_N^2}^{m-2n}=\|d_{N}\|_{\ell_N^2}^{2n}\|d_{N}'\|_{\ell_N^2}^{(p-3)n}.
\end{align*}
Finally, by the estimate for $A(t)$, we prove that 
\begin{align*}
\|u\|_{L_{t\in I}^{p-1}L_x^\infty}^{p-1}&\leq \int_I A(t)^{p-1}dt=\int_I A(t)^\frac{m}{n}dt\\
&\lesssim\int_I \|d_N\|_{\ell_N^2}^2\|d_N'\|_{\ell_N^2}^{p-3} dt\leq \|d_N\|_{L_{t\in I}^2\ell_N^2}^2\|d_N'\|_{L_{t\in I}^\infty\ell_N^2}^{p-3}\\
&\leq \|u\|_{\tilde{S}_{2,\frac{2d}{d-2}}^{s_c}(I)}^{2}\|u\|_{\tilde{S}_{\infty,2}^{s_c}(I)}^{p-3}.
\end{align*}
\end{proof}

\begin{proof}[Proof of theorem \ref{critical LWP}]
For simplicity, we assume that $d\geq3$. Indeed, with little modifications, we can prove the theorem when $d=1,2$. We define $\Phi_{u_0}(u)$ as in the proof of Theorem \ref{subcritical LWP}. Then, by Strichartz estimates, the fractional chain rule and \eqref{critical Lx-infty bound}, we have
\begin{align*}
\|\Phi_{u_0}(u)\|_{\tilde{S}_{2,\frac{2d}{d-2}}^{s_c}(I)}&\leq \|e^{it(-\Delta)^\sigma}u_0\|_{\tilde{S}_{2,\frac{2d}{d-2}}^{s_c}(I)}+c_0\||u|^{p-1}u\|_{L_{t\in I}^1H^{s_c}}\\
&\leq \|e^{it(-\Delta)^\sigma}u_0\|_{\tilde{S}_{2,\frac{2d}{d-2}}^{s_c}(I)}+c_1\|u\|_{L_{t\in I}^{p-1}L_x^\infty}^{p-1}\|u\|_{L_{t\in I}^\infty H^{s_c}}\\
&\leq \|e^{it(-\Delta)^\sigma}u_0\|_{\tilde{S}_{2,\frac{2d}{d-2}}^{s_c}(I)}+c\|u\|_{\tilde{S}_{2,\frac{2d}{d-2}}^{s_c}(I)}^2\|u\|_{\tilde{S}_{\infty,2}^{s_c}(I)}^{p-2}.
\end{align*}
Similarly, one can show that
\begin{align*}
\|\Phi_{u_0}(u)\|_{\tilde{S}_{\infty,2}^{s_c}(I)}&\leq c\|u_0\|_{H^{s_c}}+c\|u\|_{\tilde{S}_{2,\frac{2d}{d-2}}^{s_c}(I)}^2\|u\|_{\tilde{S}_{\infty,2}^{s_c}(I)}^{p-2}
\end{align*}
and
\begin{align*}
&\|\Phi_{u_0}(u)-\Phi_{u_0}(v)\|_{\tilde{S}_{2,\frac{2d}{d-2}}^0(I)}+\|\Phi_{u_0}(u)-\Phi_{u_0}(v)\|_{\tilde{S}_{\infty, 2}^0(I)}\\
&\leq c_0(\|u\|_{L_{t\in I}^{p-1}L_x^\infty}^{p-1}+\|v\|_{L_{t\in I}^{p-1}L_x^\infty}^{p-1})\|u-v\|_{L_{t\in I}^\infty L_x^2}\\
&\leq c(\|u\|_{\tilde{S}_{2,\frac{2d}{d-2}}^{s_c}(I)}^2\|u\|_{\tilde{S}_{\infty,2}^{s_c}(I)}^{p-2}+\|v\|_{\tilde{S}_{2,\frac{2d}{d-2}}^{s_c}(I)}^2\|v\|_{\tilde{S}_{\infty,2}^{s_c}(I)}^{p-2})\|u-v\|_{L_{t\in I}^\infty L_x^2}.
\end{align*}
Now we let $\delta=\delta(c,\|u_0\|_{H^{s_c}})>0$ be a sufficiently small number to be chosen later, and then we pick $T=T(u_0,\delta)>0$ such that
$$\|e^{it(-\Delta)^\sigma}u_0\|_{\tilde{S}_{2,\frac{2d}{d-2}}^{s_c}(I)}\leq\delta,$$
Define 
$$B=\Big\{u: \|u\|_{\tilde{S}_{2,\frac{2d}{d-2}}^{s_c}(I)}\leq 2\delta\textup{ and }\|u\|_{\tilde{S}_{\infty,2}^{s_c}(I)}\leq 2c\|u_0\|_{H^{s_c}}\Big\}$$
equipped with the norm
$$\|u\|_X:=\|u\|_{\tilde{S}_{2,\frac{2d}{d-2}}^{0}(I)}+\|u\|_{\tilde{S}_{\infty,2}^{0}(I)}.$$
Then, for $u\in B$, we have
\begin{align*}
\|\Phi_{u_0}(u)\|_{\tilde{S}_{2,\frac{2d}{d-2}}^{s_c}(I)}&\leq \delta+c (2\delta)^2(2c\|u_0\|_{{H}^{s_c}})^{p-2}\leq 2\delta,\\
\|\Phi_{u_0}(u)\|_{\tilde{S}_{\infty,2}^{s_c}(I)}&\leq c\|u_0\|_{{H}^{s_c}}+c (2\delta)^2(2c\|u_0\|_{{H}^{s_c}})^{p-2}\leq 2c\|u_0\|_{{H}^{s_c}}.
\end{align*}
Choosing sufficiently small $\delta>0$, we prove that $\Phi_{u_0}$ maps  $B$ to itself. Similarly, one can show 
$$\|\Phi_{u_0}(u)-\Phi_{u_0}(v)\|_X\leq\frac{1}{2}\|u-v\|_X.$$
Therefore, it follows that $\Phi_{u_0}$ is a contraction mapping in $B$.
\end{proof}

\begin{remark}
$(i)$ In the proofs, the $L_x^\infty$ norm bounds are crucial for the following reason. In Proposition \ref{Strichartz}, there is a loss of regularity except the trivial ones,
$$\|e^{it(-\Delta)^\sigma}u_0\|_{L_{t\in I}^\infty L_x^2}=\|u_0\|_{L^2}$$
and
$$\Big\|\int_0^t e^{it(-\Delta)^\sigma}F(s)ds\Big\|_{L_{t\in I}^\infty L_x^2}\leq \|F\|_{L_{t\in I}^1L_x^2}.$$
Hence, when we estimate the $L_{t\in I}^\infty H_x^s$ norm of the integral term in $\Phi_{u_0}(u)$, we are forced to use the trivial one
\begin{equation}
\Big\|\int_0^t e^{it(-\Delta)^\sigma}|u|^{p-1}u(s)ds\Big\|_{L_{t\in I}^\infty H_x^s}\leq \||u|^{p-1}u\|_{L_{t\in I}^1H_x^s}.
\end{equation}
Indeed, otherwise, we have a higher regularity norm on the right hand side. Then, we cannot close the contraction mapping argument. Moreover, if $u_0\in H^s$, there is no good bound for $\|e^{it(-\Delta)^\sigma}u_0\|_{L_{t\in I}^qW_x^{s,r}}$ except the trivial one $(q,r)=(\infty,2)$. Thus, we are forced to bound the right hand side of (3.10) by
$$\|u\|_{L_{t\in I}^{p-1}L_x^\infty}^{p-1}\|u\|_{L_{t\in I}^\infty H_x^s}.$$
Therefore, we should have a good control on $\|u\|_{L_{t\in I}^{p-1}L_x^\infty}$.\\
$(ii)$ When $p<3$, the $L_{t\in I}^{p-1}L_x^\infty$ norm is scaling-supercritical. Thus, based on our method, the assumptions on $p$ in Theorem \ref{subcritical LWP} and \ref{critical LWP} are optimal except $p=3$ in the critical case.
\end{remark}

\section{Small Data Scattering}

\begin{proof}[Proof of Theorem \ref{scattering}]
For simplicity, we consider the case $d\geq 3$ only. It follows from the estimates in the proof of Theorem \ref{critical LWP} that if $\|u_0\|_{H^s}$ is small enough, then
$$\|u(t)\|_{L_{t\in\mathbb{R}}^{p-1}L_x^\infty}+\|u(t)\|_{L_{t\in\mathbb{R}}^\infty H_x^{s_c}}\leq\|u(t)\|_{\tilde{S}_{2,\frac{2d}{d-2}}^{s_c}(\mathbb{R})}+\|u(t)\|_{\tilde{S}_{\infty,2}^{s_c}(\mathbb{R})}\lesssim\|u_0\|_{H^{s_c}}<\infty.$$
By Strichartz estimates, the fractional chain rule and \eqref{critical Lx-infty bound}, we prove that
\begin{align*}
&\|e^{-iT_1(-\Delta)^\sigma}u(T_1)-e^{-iT_2(-\Delta)^\sigma}u(T_2)\|_{H^{s_c}}\\
&=\Big\|\int_{T_1}^{T_2} e^{-is(-\Delta)^\sigma}(|u|^{p-1}u)(s)ds\Big\|_{H^{s_c}}\\
&\leq \|u(t)\|_{L_{t\in[T_1,T_2)}^{p-1}L_x^\infty}^{p-1}\|u(t)\|_{L_{t\in[T_1,T_2)}^\infty H_x^{s_c}}\to 0
\end{align*}
as $T_1,T_2\to\pm\infty$. Thus, the limits
$$u_\pm=\lim_{t\to\pm\infty} e^{-it(-\Delta)^\sigma}u(t)$$
exist in $H^{s_c}$. Repeating the above estimates, we show that
$$\|u(t)-e^{it(-\Delta)^\sigma}u_\pm\|_{H^{s_c}}=\|e^{-it(-\Delta)^\sigma}u(t)-u_\pm\|_{H^{s_c}}\to 0$$ 
as $t\to\pm\infty$.
\end{proof}

\section{Ill-posedness}

We will prove Theorem \ref{ill-posedness} following the strategy in \cite{CCT}. Throughout this section, we assume that $d=1,2$ or $3$ and $\frac{d}{4}<\sigma<1$. If $p$ is not an odd integer, we further assume that $p\geq k+1$, where $k$ is the smallest integer greater than $\frac{d}{2}$. 

First, we construct an almost non-dispersive solution by small dispersion analysis.

\begin{lemma}[Small dispersion analysis]\label{small dispersion analysis}
Given a Schwartz function $\phi_0$, let $\phi^{(\nu)}(t,x)$ be the solution to the fractional NLS
\begin{equation}\label{small dispersion}
i\partial_t u+\nu^{2\sigma}(-\Delta)^\sigma u+\mu|u|^{p-1}u=0,\ u(0)=\phi_0,
\end{equation}
and $\phi^{(0)}(t,x)$ be the solution to the ODE with no dispersion
$$i\partial_t u+\mu|u|^{p-1}u=0,\ u(0)=\phi_0,$$
that is,
\begin{equation}\label{no dispersion}
\phi^{(0)}(t,x)=\phi_0(x)e^{it\omega |\phi_0(x)|^{p-1}}.
\end{equation}
Then there exist $C, c>0$ such that if $0<\nu\leq c$ is sufficiently small, then
\begin{equation}\label{small dispersion estimate}
\|\phi^{(\nu)}(t)-\phi^{(0)}(t)\|_{H^k}\leq C\nu^{2\sigma}
\end{equation}
for all $|t|\leq c |\log \nu|^c$.
\end{lemma}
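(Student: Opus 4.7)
The plan is to set $w(t) = \phi^{(\nu)}(t) - \phi^{(0)}(t)$ and study its equation; subtracting the two evolutions one obtains
\begin{equation*}
i\partial_t w + \nu^{2\sigma}(-\Delta)^\sigma w = -\mu\bigl(F(\phi^{(0)}+w)-F(\phi^{(0)})\bigr) - \nu^{2\sigma}(-\Delta)^\sigma\phi^{(0)},\qquad w(0)=0,
\end{equation*}
with $F(z)=|z|^{p-1}z$. The last term is a forcing of size $\nu^{2\sigma}$ (times a polynomial in $t$), and the nonlinear difference vanishes linearly in $w$ and can be absorbed by an $H^k$ energy estimate combined with Gronwall's lemma.

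The crucial structural observation is that the dispersionless ODE flow preserves amplitude pointwise: $|\phi^{(0)}(t,x)|=|\phi_0(x)|$, so $\|\phi^{(0)}(t)\|_{L^\infty}=\|\phi_0\|_{L^\infty}$ does not grow in $t$. All $t$-dependence lies in the phase $e^{it\mu|\phi_0(x)|^{p-1}}$; each spatial derivative falling on this phase produces a factor of $t$, and an induction on $|\alpha|\leq k$ gives
\begin{equation*}
\|\phi^{(0)}(t)\|_{H^k}\lesssim\langle t\rangle^k,\qquad \|(-\Delta)^\sigma\phi^{(0)}(t)\|_{H^k}\lesssim\langle t\rangle^{k+2\sigma},
\end{equation*}
with implicit constants depending on $\phi_0, p, k, \sigma$. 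The hypothesis $p\geq k+1$ (when $p$ is not an odd integer) enters precisely here, ensuring $|\phi_0|^{p-1}$ is of class $C^k$ and so can be differentiated $k$ times.

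Since $k>d/2$, $H^k$ is an algebra embedded in $L^\infty$. Writing the nonlinear difference as $\int_0^1 F'(\phi^{(0)}+\theta w)\,w\,d\theta$, a Moser/fractional-Leibniz estimate (using the uniform $L^\infty$-bound on $\phi^{(0)}$) yields
\begin{equation*}
\|F(\phi^{(0)}+w)-F(\phi^{(0)})\|_{H^k}\lesssim\bigl(1+\|\phi^{(0)}\|_{H^k}+\|w\|_{H^k}\bigr)^{p-1}\|w\|_{H^k}.
\end{equation*}
Since $\nu^{2\sigma}(-\Delta)^\sigma$ is self-adjoint on $H^k$, it contributes nothing to $\tfrac{d}{dt}\|w\|_{H^k}^2$, so the $H^k$ energy estimate produces
\begin{equation*}
\frac{d}{dt}\|w(t)\|_{H^k}\lesssim\bigl(1+\langle t\rangle^k+\|w\|_{H^k}\bigr)^{p-1}\|w(t)\|_{H^k}+\nu^{2\sigma}\langle t\rangle^{k+2\sigma}.
\end{equation*}

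The argument is closed by a continuity bootstrap. Assuming $\|w(t)\|_{H^k}\leq 1$ on a maximal interval $[0,T^*)$, the differential inequality reduces to $\tfrac{d}{dt}\|w\|_{H^k}\lesssim\langle t\rangle^{(p-1)k}\|w\|_{H^k}+\nu^{2\sigma}\langle t\rangle^{k+2\sigma}$, and Gronwall gives
\begin{equation*}
\|w(t)\|_{H^k}\lesssim\nu^{2\sigma}\langle t\rangle^{k+2\sigma+1}\exp\bigl(C\langle t\rangle^{(p-1)k+1}\bigr).
\end{equation*}
Taking $c>0$ small enough that $Cc^{(p-1)k+1}\ll 1$, for $|t|\leq c|\log\nu|^{1/((p-1)k+1)}$ one has $\langle t\rangle^{(p-1)k+1}\ll|\log\nu|$, so the exponential factor is bounded by an arbitrarily small negative power of $\nu$; subsuming the logarithmic prefactors, the right-hand side is $\leq C\nu^{2\sigma}$, which closes the bootstrap (since $C\nu^{2\sigma}\ll 1$ for $\nu$ small) and gives \eqref{small dispersion estimate} on the range $|t|\leq c|\log\nu|^c$.

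The main obstacle is precisely this Gronwall step: the polynomial growth $\|\phi^{(0)}\|_{H^k}\lesssim\langle t\rangle^k$ translates, through the linearization of the nonlinear term, into exponential-in-$\langle t\rangle^{(p-1)k+1}$ amplification of the $H^k$-error, which is what restricts the validity to the slowly growing scale $|t|\lesssim|\log\nu|^c$. A secondary technical matter is justifying the Moser/chain-rule estimate for $F(z)=|z|^{p-1}z$ when $p$ is not an odd integer, which is precisely the role of the assumption $p\geq k+1$.
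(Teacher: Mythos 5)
Your proposal is essentially the paper's own route: the paper proves this lemma by simply invoking Lemma 2.1 of Christ--Colliander--Tao, whose proof is exactly the scheme you describe --- subtract the flows, treat $\nu^{2\sigma}(-\Delta)^\sigma\phi^{(0)}$ as an $O(\nu^{2\sigma}\langle t\rangle^{k+2\sigma})$ forcing, run an $H^k$ energy estimate (the skew-adjoint dispersion drops out), and close by Gronwall plus a bootstrap, the polynomial growth of $\|\phi^{(0)}(t)\|_{H^k}$ being what restricts the conclusion to $|t|\leq c|\log\nu|^c$. One small bookkeeping caveat: your forcing bound needs $\|\phi^{(0)}(t)\|_{H^{k+2\sigma}}$, i.e.\ slightly more regularity of $|\phi_0|^{p-1}$ than the $C^k$ smoothness you extract from $p\geq k+1$, a point equally implicit in the cited CCT lemma and worth a line when $p$ is not an odd integer.
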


\begin{proof}
The proof closely follows the proof of Lemma 2.1 in \cite{CCT}.
\end{proof}

Obviously, $\phi^{(\nu)}(t,\nu x)$ is a solution to $(\textup{NLS}_\sigma)$. Moreover, $\phi^{(\nu)}(t,\nu x)$ is bounded and almost flat in the following sense.

\begin{corollary}\label{small dispersion corollary}
Let $\phi^{(\nu)}$, $\nu$ and $c$ be in Lemma \ref{small dispersion analysis}. Let $s\geq 0$. Then,
\begin{equation}\label{L^infty control}
\|\phi^{(\nu)}(t,\nu x)\|_{L_x^\infty}\sim 1
\end{equation}
and
\begin{equation}\label{H^s control}
\|\phi^{(\nu)}(t,\nu x)\|_{\dot{H}_x^s}\sim \nu^{s-\frac{d}{2}}(c |\log \nu|^c)^s
\end{equation}
for all $|t|\leq c |\log \nu|^c$.
\end{corollary}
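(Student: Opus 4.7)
My plan is to estimate $\phi^{(\nu)}(t,\nu x)$ by combining a scaling identity, the approximation from Lemma \ref{small dispersion analysis}, and direct bounds on the dispersionless solution $\phi^{(0)}(t,x)=\phi_0(x)\,e^{it\omega|\phi_0(x)|^{p-1}}$ given by \eqref{no dispersion}.

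First I would observe that the dilation $x\mapsto\nu x$ preserves $L_x^\infty$, so \eqref{L^infty control} reduces to $\|\phi^{(\nu)}(t)\|_{L^\infty}\sim 1$. Since $|\phi^{(0)}(t,x)|=|\phi_0(x)|$ we have $\|\phi^{(0)}(t)\|_{L^\infty}=\|\phi_0\|_{L^\infty}\sim 1$, and the Sobolev embedding $H^k\hookrightarrow L^\infty$ (valid because $k>d/2$) together with \eqref{small dispersion estimate} gives $\|\phi^{(\nu)}(t)-\phi^{(0)}(t)\|_{L^\infty}\lesssim\nu^{2\sigma}$, which is negligible for $\nu\le c$ small. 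This disposes of \eqref{L^infty control}.

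For \eqref{H^s control} I would use the scaling identity $\|f(\nu\cdot)\|_{\dot H^s}=\nu^{s-d/2}\|f\|_{\dot H^s}$ to reduce to proving $\|\phi^{(\nu)}(t)\|_{\dot H^s}\sim(c|\log\nu|^c)^s$. The key step is to estimate $\|\phi^{(0)}(t)\|_{\dot H^s}$ for $0\le s\le k$. For integer $s=j$, a Leibniz and Fa\`a di Bruno expansion of $\partial^\alpha\bigl(\phi_0\,e^{it\omega|\phi_0|^{p-1}}\bigr)$ produces a leading term of the form $(it\omega)^j\phi_0\bigl(\nabla|\phi_0|^{p-1}\bigr)^{j}e^{it\omega|\phi_0|^{p-1}}$ together with corrections of strictly lower order in $|t|$; this uses the Schwartz decay of $\phi_0$ and the $C^k$ regularity of $|\phi_0|^{p-1}$, which is precisely why Theorem \ref{ill-posedness} assumes $p\geq k+1$ when $p$ is not an odd integer. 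Reading off $L^2$ norms gives $\|\phi^{(0)}(t)\|_{\dot H^j}\sim(1+|t|)^j$, and interpolation between $L^2$ and $\dot H^{\lceil s\rceil}$ extends this bound to fractional exponents $s\in[0,k]$.

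To transfer the estimate from $\phi^{(0)}$ to $\phi^{(\nu)}$ I would use Lemma \ref{small dispersion analysis}, which controls both the $L^2$ and $\dot H^k$ norms of $\phi^{(\nu)}(t)-\phi^{(0)}(t)$ by $C\nu^{2\sigma}$; interpolation then yields $\|\phi^{(\nu)}(t)-\phi^{(0)}(t)\|_{\dot H^s}\lesssim\nu^{2\sigma}$ uniformly for $s\in[0,k]$. Since $\nu^{2\sigma}$ is polynomially small in $\nu$ while $(c|\log\nu|^c)^s$ is only logarithmically large, the $\phi^{(0)}$ contribution dominates whenever $|t|$ is comparable to $c|\log\nu|^c$, and \eqref{H^s control} follows after multiplication by the scaling factor $\nu^{s-d/2}$. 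The main delicate point is the matching lower bound for $\|\phi^{(0)}(t)\|_{\dot H^j}$: one must choose $\phi_0$ so that the quantity $\phi_0\bigl(\nabla|\phi_0|^{p-1}\bigr)^{j}$ does not vanish in $L^2$, for example a Schwartz bump with genuinely varying amplitude on its support, so that the phase-growth term of size $|t|^j$ really dominates the bounded contributions coming from derivatives of $\phi_0$ itself once $|t|\gg 1$.
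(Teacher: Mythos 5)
Your proposal is correct and follows essentially the same route as the paper: reduce via the dilation/scaling identity and the Sobolev embedding $H^k\hookrightarrow L^\infty$ to the explicit dispersionless profile $\phi^{(0)}$, whose $L^\infty$ norm is constant and whose $\dot H^s$ norm grows like $|t|^s$ from differentiating the phase, and then absorb the $O(\nu^{2\sigma})$ error from Lemma \ref{small dispersion analysis}. You are in fact more careful than the paper (which simply asserts $\|\phi^{(0)}(t)\|_{\dot H^s}\sim(c|\log\nu|^c)^s$) about justifying the growth of $\|\phi^{(0)}(t)\|_{\dot H^s}$ and the matching lower bound.
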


\begin{proof}
Since $k>\frac{d}{2}$, by the Sobolev inequality, we have
\begin{align*}
\|\phi^{(\nu)}(t,\nu x)-\phi^{(0)}(t,\nu x)\|_{L_x^\infty}&=\|\phi^{(\nu)}(t, x)-\phi^{(0)}(t,x)\|_{L_x^\infty}\\
&\lesssim\|\phi^{(\nu)}(t)-\phi^{(0)}(t)\|_{H^k}\lesssim\nu^{2\sigma}.
\end{align*}
Then, \eqref{L^infty control} follows from the explicit formula \eqref{no dispersion} for $\phi^{(0)}(t,x)$. 
It follows from \eqref{small dispersion estimate} and \eqref{no dispersion} that
$$\|\phi^{(\nu)}(t, \nu x)\|_{\dot{H}_x^s}\leq \nu^{s-\frac{d}{2}}(\|\phi^{(0)}(t)\|_{\dot{H}^s}+\|\phi^{(\nu)}(t)-\phi^{(0)}(t)\|_{\dot{H}^s})\sim \nu^{s-\frac{d}{2}}(c |\log \nu|^c)^s.$$
\end{proof}

For $v\in\mathbb{R}^d$, we define the pseudo-Galilean transformation by
$$\mathcal{G}_vu(t,x)=e^{-iv\cdot x}e^{it|v|^{2\sigma}}u(t,x-2t\sigma|v|^{2(\sigma-1)}v).$$
Note that when $\sigma=1$, $\mathcal{G}_v$ is simply a Galilean transformation, and that NLS is invariant under this transformation, that is, if $u(t)$ solves NLS, so does $\mathcal{G}_vu(t)$. However, when $\sigma\neq1$, $(\textup{NLS}_\sigma)$ is not exactly symmetric with respect to pseudo-Galilean transformations. Indeed, if $u(t)$ solves $(\textup{NLS}_\sigma)$, then $\tilde{u}(t)=\mathcal{G}_vu(t)$ obeys $(\textup{NLS}_\sigma)$ with an error term
\begin{equation}\label{fNLS with error}
i\partial_t\tilde{u}+(-\Delta)^\sigma\tilde{u}+\omega|\tilde{u}|^{p-1}\tilde{u}=e^{it|v|^{2\sigma}}e^{-iv\cdot x}(\mathcal{E}u)(t,x-2\sigma t|v|^{2(\sigma-1)}v),
\end{equation}
where
$$\widehat{\mathcal{E}u}(\xi)=E(\xi)\hat{u}(\xi)$$
with 
$$E(\xi)=|\xi-v|^{2\sigma}-|\xi|^{2\sigma}-|v|^{2\sigma}+2\sigma|v|^{2(\sigma-1)}v\cdot\xi.$$
However, we note that
\begin{equation}\label{E bound}
|E(\xi)|\lesssim |\xi|^{2\sigma}.
\end{equation}
Indeed, if $|\xi|\leq\frac{|v|}{100}$, then
$$E(\xi)=\Big||v|^{2\sigma}\Big(|\tfrac{v}{|v|}-\tfrac{\xi}{|v|}|^{2\sigma}-1+2\sigma \tfrac{v}{|v|}\cdot\tfrac{\xi}{|v|}\Big)-|\xi|^{2\sigma}\Big|\lesssim|v|^{2\sigma}\tfrac{|\xi|^2}{|v|^2}+|\xi|^{2\sigma}\lesssim|\xi|^{2\sigma}.$$
Otherwise, 
$$E(\xi)\lesssim|\xi|^{2\sigma}+|v|^{2\sigma}+|\xi|^{2\sigma}+|v|^{2\sigma}+2\sigma|v|^{2\sigma-1}|\xi|\lesssim|\xi|^{2\sigma}.$$
Therefore, one would expect an \textit{almost} symmetry for an almost flat solution $u(t)$, such as $\phi^{(\nu)}(t,\nu x)$ in Lemma \ref{small dispersion analysis}. Precisely, we have the following lemma.

\begin{lemma}[Pseudo-Galilean transformation]\label{Pseudo-Galilean transformation}
Let $\phi^{(\nu)}$, $\nu$ and $c$ be in Lemma \ref{small dispersion analysis}. For $v\in\mathbb{R}^d$, we define
$$\tilde{u}(t,x)=(\mathcal{G}_v\phi^{(\nu)}(\cdot, \nu\cdot))(t,x)=e^{-iv\cdot x}e^{it|v|^{2\sigma}}\phi^{(\nu)}\big(t,\nu(x-2t\sigma|v|^{2(\sigma-1)}v)\big),$$
and let $u(t,x)$ be the solution to $(\textup{NLS}_\sigma)$ with the same initial data
\begin{equation}\label{initial data}
e^{-iv\cdot x}\phi^{(\nu)}(0,\nu x)=e^{-iv\cdot x}\phi_0(0,\nu x).
\end{equation}
Then, there exists $\delta>0$ such that
\begin{equation}\label{estim}
\|e^{iv\cdot x}(u(t)-\tilde{u}(t))\|_{H_x^k}\lesssim \nu^{\delta}
\end{equation}
for all $|t|\leq c |\log \nu|^c$.
\end{lemma}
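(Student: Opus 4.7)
The plan is to write down the equation satisfied by $W(t,x) := e^{iv\cdot x}(u(t,x)-\tilde u(t,x))$, perform an $H^k$ energy estimate in which the conjugated dispersion drops out by self-adjointness, and close a Gronwall/bootstrap argument driven by the smallness of the pseudo-Galilean defect $\mathcal{E}\psi$.

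Setting $U := e^{iv\cdot x}u$ and $\tilde U := e^{iv\cdot x}\tilde u = e^{it|v|^{2\sigma}}\psi(t, x - 2\sigma t|v|^{2(\sigma-1)}v)$ with $\psi(t,x) := \phi^{(\nu)}(t,\nu x)$, a rescaling shows that $\psi$ solves $(\textup{NLS}_\sigma)$, and \eqref{fNLS with error} then yields
\begin{equation*}
i\partial_t W + LW + \mu\bigl(|U|^{p-1}U - |\tilde U|^{p-1}\tilde U\bigr) = G,\qquad W(0) = 0,
\end{equation*}
where $L := e^{iv\cdot x}(-\Delta)^\sigma e^{-iv\cdot x}$ is the Fourier multiplier with real symbol $|\xi-v|^{2\sigma}$ and $G(t,x) := -e^{it|v|^{2\sigma}}(\mathcal{E}\psi)(t, x - 2\sigma t|v|^{2(\sigma-1)}v)$. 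Since translation and unimodular multiplication preserve $H^k$, $\|G(t)\|_{H^k} = \|\mathcal{E}\psi(t)\|_{H^k}$; combining \eqref{E bound} with Plancherel and Corollary \ref{small dispersion corollary} applied at $s=2\sigma$ and $s=k+2\sigma$ gives
\begin{equation*}
\|G(t)\|_{H^k}\lesssim \nu^{2\sigma-d/2}|\log\nu|^{O(1)},\qquad |t|\le c|\log\nu|^c.
\end{equation*}
The hypothesis $\sigma > d/4$ makes $2\sigma-d/2>0$.

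For the energy estimate, $L$ commutes with the $H^k$-defining Fourier multiplier and both have real symbols, so $\la LW, W\ra_{H^k}$ is real and $\Re\,\la -iLW, W\ra_{H^k}=0$: the dispersion does not contribute to $\tfrac12\tfrac{d}{dt}\|W\|_{H^k}^2$. I expand the nonlinear difference as $|U|^{p-1}U - |\tilde U|^{p-1}\tilde U = \int_0^1\partial_\theta|\tilde U+\theta W|^{p-1}(\tilde U+\theta W)\,d\theta$ and apply the fractional Leibniz and chain rules, valid under the standing assumption $p\ge k+1$ when $p$ is not an odd integer. The decisive inputs from Corollary \ref{small dispersion corollary} are $\|\tilde U\|_{L^\infty}\sim 1$ and $\|\tilde U\|_{\dot H^k}\lesssim \nu^{k-d/2}(c|\log\nu|^c)^k$, the latter vanishing with $\nu$ because $k>d/2$. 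Splitting $H^k$ into $L^2$ and $\dot H^k$ components so that only these norms of $\tilde U$ enter, using Sobolev $H^k\hookrightarrow L^\infty$, and imposing a bootstrap $\|W\|_{H^k}\le 1$, I obtain $\||U|^{p-1}U-|\tilde U|^{p-1}\tilde U\|_{H^k}\lesssim \|W\|_{H^k}$ for $\nu$ sufficiently small.

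Combining these, $\tfrac{d}{dt}\|W(t)\|_{H^k}\lesssim \|W(t)\|_{H^k}+\|G(t)\|_{H^k}$ on the bootstrap interval, and Gronwall with $W(0)=0$ gives $\|W(t)\|_{H^k}\lesssim e^{Ct}|\log\nu|^{O(1)}\nu^{2\sigma-d/2}$. Choosing $c>0$ so small that $Cc<2\sigma-d/2$ makes the right-hand side $\lesssim \nu^\delta$ for some $\delta\in(0,\,2\sigma-d/2)$ throughout $|t|\le c|\log\nu|^c$; for $\nu$ small this is well below $1$, closing the bootstrap and proving \eqref{estim}. The main obstacle is the nonlinear $H^k$ estimate: since $\|\tilde U\|_{L^2}\sim\nu^{-d/2}$ is large, the naive algebra bound $\|fg\|_{H^k}\lesssim \|f\|_{H^k}\|g\|_{H^k}$ is useless, and the whole argument hinges on using $\|\cdot\|_{H^k}\sim \|\cdot\|_{L^2}+\|\cdot\|_{\dot H^k}$ together with the homogeneous Leibniz rule so that only the small $\dot H^k$ norm of $\tilde U$ appears; this is precisely where the smoothness hypothesis $p\ge k+1$ is used.
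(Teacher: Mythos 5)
Your argument is correct and matches the paper's proof in all essentials: conjugate the difference by $e^{iv\cdot x}$, bound the pseudo-Galilean defect by $\nu^{2\sigma-\frac{d}{2}}$ (up to logarithmic factors) via \eqref{E bound} and Corollary \ref{small dispersion corollary} (which is exactly where $\sigma>\frac{d}{4}$ enters), estimate the nonlinear difference using $\|e^{iv\cdot x}\tilde{u}\|_{L^\infty}\sim 1$ together with the fact that derivatives falling on the profile gain factors of $\nu$, and close with a Gronwall/bootstrap argument on $|t|\leq c|\log\nu|^c$. The only, immaterial, difference is that you run an $H^k$ energy estimate exploiting the real symbol $|\xi-v|^{2\sigma}$ of the conjugated dispersion, whereas the paper writes the Duhamel formula and uses that the conjugated propagator is an isometry on $H^k$.
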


\begin{remark}
When $p=3$, in \cite{CHKL} the authors could use the counterexample in  
\cite{CCTamer}. This counterexample is constructed by  
pseudo-conformal symmetry and Galilean transformation. A good thing is  
that this solution is very small in high Sobolev norms, too. Somehow,  
this smallness allows \cite{CHKL} to show that the error in pseudo-Galilean  
transformation is also small. However, when $p>3$, the counterexample in  
\cite{CCTamer} does not work. Later, Christ, Colliander and Tao \cite{CCT} constructed  
a different counterexample which works for more general $p$.  
Unfortunately, this counterexample is not small in high Sobolev norms.  
It is very large instead. In particular, for our purposes, it is hard to control the error from  
pseudo-Galilean transformation. But, our new  
counterexample still has small high Sobolev norm after translating it  
to its frequency center; this is the term $e^{iv\cdot x}$ in equation \eqref{estim}. Using this smallness, we can prove that pseudo-Galilean  
transformation is almost invariant. We also remark that the condition $\sigma>\frac{d}{4}$ is to guarantee smallness of the error (see \eqref{II(s) estimate}).
\end{remark}

\begin{proof}[Proof of Lemma \ref{Pseudo-Galilean transformation}]
Let $R(t)=(u-\tilde{u})(t)$. Then, $R(t)$ satisfies
$$i\partial_tR+(-\Delta)^\sigma R=\mu\big(|\tilde{u}|^{p-1}\tilde{u}-|u|^{p-1}u\big)-e^{it|v|^{2\sigma}}(\mathcal{E}\phi^{(\nu)}\big(t,\nu(x-2\sigma t|v|^{2(\sigma-1)}v)\big),$$
or equivalently
\begin{align*}
R(t)&=i\int_0^t e^{i(t-s)(-\Delta)^{\sigma}}\Big\{\mu\big(|u|^{p-1}u-|\tilde{u}|^{p-1}\tilde{u}\big)(s)\\
&\quad\quad\quad\quad\quad\quad\quad\quad+e^{is|v|^{2\sigma}}(\mathcal{E}\phi^{(\nu)}\big(s,\nu(x-2\sigma s|v|^{2(\sigma-1)}v)\big)\Big\}ds.
\end{align*}
Hence, by a trivial estimate, we get
\begin{align*}
\|e^{iv\cdot x}R(t)\|_{H^k}&\leq\int_0^t \big\|e^{iv\cdot x}\big(|u|^{p-1}u-|\tilde{u}|^{p-1}\tilde{u}\big)(s)\big\|_{H^k}+\|\mathcal{E}\phi^{(\nu)}(s,\nu\cdot)\|_{H^k}ds\\
&=\int_0^t I(s)+II(s)ds.
\end{align*}
First, by \eqref{E bound} and \eqref{H^s control}, we show that
\begin{equation}\label{II(s) estimate}
\int_0^tII(s)ds\lesssim \int_0^t\sum_{j=0}^k\|\phi^{(\nu)}(s,\nu\cdot)\|_{\dot{H}^{j+2\sigma}}ds\sim (c |\log \nu|^c)^{1+2\sigma-\frac{d}{2}}\nu^{2\sigma-\frac{d}{2}}.
\end{equation}
For $I(s)$, expanding $u=\tilde{u}+R$ and then applying H\"older inequality and Sobolev inequalities, we bound $I(s)$ by
\begin{equation}\label{I(s) bound}
\lesssim\sum_{j=1}^p\|e^{iv\cdot x}R\|_{H^k}^j.
\end{equation}
For example, when $p=3$,
\begin{align*}
I(s)&\leq2\| |e^{iv\cdot x}\tilde{u}|^2e^{iv\cdot x}R\|_{H^k}+\|(e^{iv\cdot x}\tilde{u})^2\overline{e^{iv\cdot x}R}\|_{H^k}+2\|e^{iv\cdot x}\tilde{u}|e^{iv\cdot x}R|^2\|_{H^k}\\
&\quad+\|\overline{e^{iv\cdot x}\tilde{u}}(e^{iv\cdot x}R)^2\|_{H^k}+\||e^{iv\cdot x}R|^2e^{iv\cdot x}R\|_{H^k}\\
&=:I_1(s)+I_2(s)+I_3(s)+I_4(s)+I_5(s).
\end{align*}
Consider
$$I_1(s)=\sum_{|\alpha|\leq k}\|\nabla_{x_1}^{\alpha_1}\cdots\nabla_{x_d}^{\alpha_d}(|e^{iv\cdot x}\tilde{u}|^2e^{iv\cdot x}R)(s)\|_{L^2}=:\sum_{|\alpha|\leq k}I_{1,\alpha}(s),$$
where $\alpha=(\alpha_1,\alpha_2,\cdots,\alpha_d)$ is a multi-index with $|\alpha|=\sum_{i=1}^d\alpha_i$. Observe that whenever a derivative hits
$$e^{iv\cdot x}\tilde{u}(s)=e^{is|v|^{2\sigma}}\phi^{(\nu)}\big(s,\nu(x-2s\sigma|v|^{2(\sigma-1)}v)\big),$$
we get a small factor $\nu$. Hence, after distributing derivatives by the Leibniz rule,  the worst term we have in $I_{1,\alpha}(s)$ is
$$\| |e^{iv\cdot x}\tilde{u}(s)|^2\nabla^\alpha e^{iv\cdot x}R(s)\|_{L^2},$$
which is, by \eqref{L^infty control}, bounded by 
$$\| e^{iv\cdot x}\tilde{u}(s)\|_{L^\infty}^2\|\nabla^\alpha e^{iv\cdot x}R(s)\|_{L^2}\sim \|\nabla^\alpha e^{iv\cdot x}R(s)\|_{L^2}.$$
Likewise, we estimate other terms.

Collecting all,
$$\|e^{iv\cdot x}R(t)\|_{H^k}\lesssim (c |\log \nu|^c)^{1+2\sigma-\frac{d}{2}}\nu^{2\sigma-\frac{d}{2}}+\int_0^t \sum_{j=1}^p\|e^{iv\cdot x}R(s)\|_{H^k}^jds$$
for $|t|\leq c|\log \nu|^c$. Then, by the standard nonlinear iteration argument, we prove the lemma.
\end{proof}

Since we have solutions almost symmetric with respect to the pseudo-Galilean transformations, we can make use of the following decoherence lemma to construct counterexamples for local well-posedness.

\begin{lemma}[Decoherence]\label{Decoherence}
Let $s<0$. Fix a nonzero Schwartz function $w$. For $a,a'\in[\frac{1}{2},1]$, $0<\nu\leq\lambda\ll 1$ and $v\in\mathbb{R}^d$ with $|v|\geq 1$, we define 
$$\tilde{u}^{(a,\nu,\lambda,v)}(t,x):=\mathcal{G}_v\Big(\lambda^{-\frac{2\sigma}{p-1}}\phi^{(a,\nu)}(\lambda^{-2\sigma}\cdot,\lambda^{-1}\nu\cdot)\Big)(t,x),$$
where $\phi^{(a,\nu)}$ is the solution to \eqref{small dispersion} with initial data $aw$. Then, we have
\begin{align*}
\|\tilde{u}^{(a,\nu,\lambda, v)}(0)\|_{H^s}, \|\tilde{u}^{(a',\nu,\lambda, v)}(0)\|_{H^s}&\leq C|v|^s\lambda^{-\frac{2\sigma}{p-1}}(\tfrac{\lambda}{\nu})^{d/2},\\
\|\tilde{u}^{(a,\nu,\lambda, v)}(0)-\tilde{u}^{(a',\nu,\lambda, v)}(0)\|_{H^s}&\leq C|v|^s\lambda^{-\frac{2\sigma}{p-1}}(\tfrac{\lambda}{\nu})^{d/2}|a-a'|
\end{align*}
and 
\begin{align*}
&\|\tilde{u}^{(a,\nu,\lambda, v)}(t)-\tilde{u}^{(a',\nu,\lambda, v)}(t)\|_{H^s}\\
&\geq c|v|^s\lambda^{-\frac{2\sigma}{p-1}}(\tfrac{\lambda}{\nu})^{d/2}\Big\{\|(\phi^{(a,\nu)}(\tfrac{t}{\lambda^{2\sigma}})-\phi^{(a',\nu)}(\tfrac{t}{\lambda^{2\sigma}})\|_{L^2}-C|\log \nu|^C(\tfrac{\lambda}{\nu})^{-k}|v|^{-s-k}\Big\}
\end{align*}
for all $|t|\leq c|\log \nu|^c\lambda^{2\sigma}$.
\end{lemma}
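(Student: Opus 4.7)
The plan is to exploit the fact that $\tilde{u}^{(a,\nu,\lambda,v)}(t)$ is, by construction, a modulation by $e^{-iv\cdot x}$ of a function whose spatial Fourier transform is concentrated in a ball of radius $\sim\nu/\lambda\ll|v|$ around the origin. Consequently its Fourier transform lives in a ball of that radius around $\xi=-v$, where $\langle\xi\rangle\sim|v|$, and so $\langle\xi\rangle^{s}$ may be replaced by $|v|^{s}$ up to a Schwartz-tail error. Everything in the lemma then reduces to $L^2$ computations governed by the small-dispersion rescaling.

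First I would unwind the definition and write
\[
\tilde{u}^{(a,\nu,\lambda,v)}(t,x)=e^{-iv\cdot x}e^{it|v|^{2\sigma}}\lambda^{-\frac{2\sigma}{p-1}}\phi^{(a,\nu)}\!\Bigl(\tfrac{t}{\lambda^{2\sigma}},\tfrac{\nu}{\lambda}\bigl(x-2t\sigma|v|^{2\sigma-2}v\bigr)\Bigr),
\]
which after a straightforward Fourier computation gives
\[
\widehat{\tilde{u}^{(a,\nu,\lambda,v)}(t)}(\xi)=c_{t,v}\,\lambda^{-\frac{2\sigma}{p-1}}\Bigl(\tfrac{\lambda}{\nu}\Bigr)^{d}\,\widehat{\phi^{(a,\nu)}(\tfrac{t}{\lambda^{2\sigma}})}\!\Bigl(\tfrac{\lambda}{\nu}(\xi+v)\Bigr),\quad |c_{t,v}|=1.
\]
Setting $D(t)=\tilde{u}^{(a,\nu,\lambda,v)}(t)-\tilde{u}^{(a',\nu,\lambda,v)}(t)$ and $\chi(t)=\phi^{(a,\nu)}(t/\lambda^{2\sigma})-\phi^{(a',\nu)}(t/\lambda^{2\sigma})$, Plancherel immediately yields $\|D(t)\|_{L^2}=\lambda^{-2\sigma/(p-1)}(\lambda/\nu)^{d/2}\|\chi(t)\|_{L^2}$, which explains the prefactor $\lambda^{-2\sigma/(p-1)}(\lambda/\nu)^{d/2}$ that appears throughout the statement.

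For the two upper bounds at $t=0$, I insert $\phi^{(a,\nu)}(0)=aw$ into the Fourier formula and split the $H^s$ integral into the near region $\{|\xi+v|<|v|/2\}$, on which $\langle\xi\rangle\sim|v|$ so $\langle\xi\rangle^{2s}\lesssim|v|^{2s}$, and the far region, on which the Schwartz decay of $\hat w$ evaluated at $(\lambda/\nu)(\xi+v)$ (with $(\lambda/\nu)|v|\ge 1$) produces arbitrarily many inverse powers of $(\lambda/\nu)|v|$ and so is negligible compared to $|v|^{2s}$. Parseval and linearity in $a$ then give the first two bounds. For the main (lower) bound at general $t$, decompose frequency space as $N=\{|\xi+v|<|v|/2\}$ and $F=\{|\xi+v|\ge|v|/2\}$. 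Using $\langle\xi\rangle\sim|v|$ on $N$,
\[
\|D(t)\|_{H^s}^2\ge\int_N\langle\xi\rangle^{2s}|\widehat D(t,\xi)|^2\,d\xi\gtrsim|v|^{2s}\bigl(\|D(t)\|_{L^2}^2-\|\mathbf 1_F\widehat D(t)\|_{L^2}^2\bigr).
\]
A change of variables $\eta=(\lambda/\nu)(\xi+v)$ and the elementary estimate $\||\eta|^k\hat\chi\|_{L^2}\le\|\chi\|_{H^k}$ yield
\[
\|\mathbf 1_F\widehat D(t)\|_{L^2}\lesssim\lambda^{-\frac{2\sigma}{p-1}}\Bigl(\tfrac{\lambda}{\nu}\Bigr)^{d/2-k}|v|^{-k}\|\chi(t)\|_{H^k}.
\]
Taking square roots, using $\sqrt{A^2-B^2}\ge A-B$ when $A\ge B$ (the conclusion being trivial otherwise), and factoring out $c|v|^s\lambda^{-2\sigma/(p-1)}(\lambda/\nu)^{d/2}$ reproduces exactly the stated inequality, since $|v|^s\cdot(\lambda/\nu)^{-k}|v|^{-k}=|v|^s\cdot|v|^{-s-k}(\lambda/\nu)^{-k}$ after the prefactor is absorbed.

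The only non-mechanical step is the control $\|\chi(t)\|_{H^k}\lesssim|\log\nu|^{C}$ on the time interval $|t/\lambda^{2\sigma}|\le c|\log\nu|^c$. This follows from Lemma~\ref{small dispersion analysis} together with the explicit formula \eqref{no dispersion} for $\phi^{(0)}$: differentiating $\phi_0(x)e^{it\mu|\phi_0(x)|^{p-1}}$ up to $k$ times produces at worst a factor $|t|^k$ multiplying Schwartz functions, giving $\|\phi^{(0)}(s)\|_{H^k}\lesssim(1+|s|)^k$, and the $\nu^{2\sigma}$ correction from Lemma~\ref{small dispersion analysis} is harmless. I do not anticipate any serious obstacle; the remainder is routine Plancherel and Schwartz-tail bookkeeping.
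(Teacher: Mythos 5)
Your argument is correct and is essentially the proof the paper invokes: the paper simply defers to Lemma 3.1 of \cite{CCT}, whose argument is exactly this frequency-localization computation near $\xi=-v$ (Plancherel for the main term, $k$ derivatives traded for a factor $((\lambda/\nu)|v|)^{-k}$ on the far region, and the small-dispersion $H^k$ bound supplying the $|\log\nu|^{C}$ loss), so you have just written out the details the paper leaves to the citation. Apart from two harmless slips --- the unimodular factor $c_{t,v}$ actually depends on $\xi$ through the translation phase $e^{-i(\xi+v)\cdot 2t\sigma|v|^{2\sigma-2}v}$ (irrelevant since only moduli enter), and your final bookkeeping identity should read $(\lambda/\nu)^{-k}|v|^{-k}=|v|^{s}\cdot|v|^{-s-k}(\lambda/\nu)^{-k}$ --- the details check out, including the implicit use of the standing hypothesis ($p$ odd or $p\geq k+1$) when differentiating $|\phi_0|^{p-1}$ to get the polynomial-in-time $H^k$ bound.
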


\begin{proof}
The proof closely follows the proof of Lemma 3.1 in \cite{CCT}.
\end{proof}

\begin{proof}[Proof of Theorem \ref{ill-posedness}]
The proof is very similar to that of Theorem 1 in \cite{CCT} except that in the last step, we need to use Lemma \ref{Pseudo-Galilean transformation} due to lack of exact symmetry. We give a proof for the readers' convenience.

Let $\epsilon>0$ be a given but arbitrarily small number. Let $\nu=\lambda^{\alpha}$, where $\alpha>0$ is a small number to be chosen later. Then, we pick $v\in \mathbb{R}^d$ such that
$$\lambda^{-\frac{2\sigma}{p-1}}|v|^s(\lambda/\nu)^{d/2}=\epsilon\Leftrightarrow |v|=\nu^{\frac{1}{s}(\frac{d(1-\alpha)}{2}+\frac{2\alpha\sigma}{p-1})}\epsilon^{1/s}.$$
Note that since $s<0$, $\frac{1}{s}(\frac{d(1-\alpha)}{2}+\frac{2\alpha\sigma}{p-1})=\frac{1}{s}(\frac{d}{2}-\alpha s_c)<0$ for sufficiently small $\alpha$, and thus $|v|\geq 1$. Hence, it follows from Lemma \ref{Decoherence} that
 \begin{align}
\|\tilde{u}^{(a,\nu,\lambda, v)}(0)\|_{H^s}, \|\tilde{u}^{(a',\nu,\lambda, v)}(0)\|_{H^s}&\leq C\epsilon,\\
\|\tilde{u}^{(a,\nu,\lambda, v)}(0)-\tilde{u}^{(a',\nu,\lambda, v)}(0)\|_{H^s}&\leq C\epsilon|a-a'|,
\end{align}
and 
\begin{align*}
&\|\tilde{u}^{(a,\nu,\lambda, v)}(t)-\tilde{u}^{(a',\nu,\lambda, v)}(t)\|_{H^s}\\
&\geq c\epsilon\Big\{\|(\phi^{(a,\nu)}(\tfrac{t}{\lambda^{2\sigma}})-\phi^{(a',\nu)}(\tfrac{t}{\lambda^{2\sigma}})\|_{L^2}-C|\log \nu|^C(\tfrac{\lambda}{\nu})^{-k}|v|^{-s-k}\Big\}
\end{align*}
for all $|t|\leq c|\log \nu|^c\lambda^{2\sigma}$. Now we observe from the explicit formula \eqref{no dispersion} for $\phi^{(a,0)}$ and \eqref{small dispersion} that there exists $T>0$ such that $\|\phi^{(a,\nu)}(T)-\phi^{(a',\nu)}(T)\|_{L^2}\geq c$. Moverover, if $\alpha>0$ is sufficiently small, $C|\log \nu|^C(\tfrac{\lambda}{\nu})^{-k}|v|^{-s-k}\to 0$ as $\nu \to0$. Therefore, for $\nu$ small enough, we have
\begin{equation}
\|\tilde{u}^{(a,\nu,\lambda, v)}(\lambda^{2\sigma}T)-\tilde{u}^{(a',\nu,\lambda, v)}(\lambda^{2\sigma}T)\|_{H^s}\geq c\epsilon.
\end{equation}
Next, we replace $\tilde{u}^{(a,\nu,\lambda, v)}$ and $\tilde{u}^{(a',\nu,\lambda, v)}$ in $(6.11)$, $(6.12)$ and $(6.13)$ by $u^{(a,\nu,\lambda, v)}$ and $u^{(a',\nu,\lambda, v)}$ by Lemma \ref{Pseudo-Galilean transformation} with $O(\nu^{\delta})$ erorr. Then, making $|a-a'|$ arbitrarily small and then sending $\nu\to 0$ (so, $\lambda^{2\sigma}T\to 0$), we complete the proof.
\end{proof}

\section*{Acknowledgements}
Y.H. would like to thank IH\'ES for their hospitality and support while he visited in the summer of 2014. Y.S. would like to thank the hospitality of the Department of Mathematics at University of Texas at Austin where part of the work was initiated. Y.S. acknowledges the support of ANR grants "HAB" and "NONLOCAL".  

\bibliographystyle{alpha} 
\bibliography{biblio}

\end{document}